\documentclass[12pt, reqno]{amsart}
\usepackage{amsmath, amsthm, amscd, amsfonts, amssymb, graphicx, color}
\graphicspath{{figure/}}
\usepackage{color}

\usepackage{mathtools}
\usepackage[percent]{overpic}
\usepackage{amsmath,amsthm,amssymb,amsfonts}
\usepackage[colorlinks=true,linkcolor=blue,citecolor=blue,urlcolor=blue,]{hyperref}
\usepackage{mathrsfs}
\usepackage{tikz-cd}

\newtheorem{theorem}{Theorem}[section]
\newtheorem{lemma}[theorem]{Lemma}
\newtheorem{proposition}[theorem]{Proposition}
\newtheorem{corollary}[theorem]{Corollary}
\theoremstyle{definition}
\newtheorem{definition}[theorem]{Definition}

\theoremstyle{remark}

\numberwithin{equation}{section}

\begin{document}
\setcounter{page}{1}

%------------------------------------------------------------------------------

%Title of the paper
\title[On the isotopy classes of embeddings of surfaces in 5-manifolds]{On the isotopy classes of embeddings of surfaces in 5-manifolds}

%Author names and affiliations
\author[Ruoyu Qiao]{Ruoyu Qiao}

\address{Department of Mathematics, University of Maryland, College Park, USA.}
\email{\textcolor[rgb]{0.00,0.00,0.84}{wmxq@umd.edu}}

\begin{abstract}
Let $f,f':\Sigma\hookrightarrow N$ be two homotopic smooth embeddings of a closed surface in a closed oriented 5-manifold. We show that if $f$  admits a common algebraic dual 3-sphere, or if $\pi_1(N)$ is trivial, then $f$ and $f'$ must be isotopic. This generalizes a result of Kosanovi\'c--Schneiderman--Teichner \cite{Kosanovic2023}.
The proof is based on the construction of an invariant that classifies the isotopy classes of smooth embeddings of surfaces in ambient 5-manifolds within a homotopy class, which may be of independent interest. The invariant is defined in terms of the homotopy groups of the 5-manifold. 
\end{abstract} \maketitle

\section{Introduction and Results}
The relationship between homotopy and isotopy is a classical problem in topology. In 2017,  Gabai \cite{Gabai2017The4L}, \cite{gabai2020} established the 4-dimensional light bulb theorem, which states that if two embedded spheres in an ambient 4-manifold admit a common geometric dual sphere, and if the fundamental group of the ambient manifold has no 2-torsion, then embedded spheres are isotopic. More recently, Kosanovi\'c, Schneiderman, and Teichner investigated analogous questions for embeddings of 2-spheres in 5-manifolds \cite{Kosanovic2023}, and they established a complete classification of all the isotopy classes within a homotopy class. 
\par
Gabai \cite{Gabai2017The4L} also extended the light bulb theorem to ``$G$-inessential'' embeddings of surfaces in 4-manifolds (where $G$ denotes the geometric dual sphere). In contrast, Lin, Wu, Xie, and Zhang \cite{lin2025} proved a negative result in the absence of this condition, showing that there exist infinitely many homotopic embeddings of surfaces in a 4-manifold that admit a common geometric dual sphere but are mutually non-isotopic .\par
In this paper, we consider the ``homotopy versus isotopy'' question for closed oriented 2-manifolds  in 5-manifolds. All the discussions in this paper are in the smooth category. We will construct an invariant that classifies the isotopy classes within a homotopy class by constructing a self-intersection invariant of homotopies, which takes value in a quotient of the free $\mathbb{Z}$ module generated by a (set-theoretic) quotient of the fundamental group of the ambient manifold. \par
%In particular, we will:\\
%(1) explain how the isotopy classification can be described by the generalized self-intersection invariant of homotopies, with Whitney move and Hudson's theorem.\\
%(2) show how we can eliminate the ambiguity given by self-homotopy by analyzing the movements of skeletons.\\
%(3) use the invariant to show some properties of the isotopic classes.\par
Now we give a brief description of the main results. Let $\Sigma$ be a closed oriented surface of positive genus $g$, let $N$ be a 5-dimensional closed oriented manifold. Consider the canonical map 
\begin{equation}
	\label{eqn_pi0_emb_to_map}
p:\pi_0\textup{Emb}(\Sigma,N)\to\pi_0\textup{Map}(\Sigma,N).
\end{equation}
Here, $\textup{Emb}(\Sigma,N)$ denotes the space of embeddings of $\Sigma$ in $N$, and $\textup{Map}(\Sigma,N)$ denotes the space of smooth maps from $\Sigma$ to $N$. 
We will consider the ``homotopy classes of homotopies'' between embeddings.
For an embedding $f:\Sigma\hookrightarrow N^5$, we will use $\mathscr{H}^f_0$ to denote the homotopy classes of self-homotopies of $f$, and use $\mathscr{H}^{f}$ to denote the homotopy classes of homotopies that start with $f$ (see Definitions \ref{def2.1}, \ref{def3.1}).  Let $G$ be the image of $f_*:\pi_1\Sigma\rightarrow\pi_1N$. We will define the set $\mathbb{A}_f$ as a certain quotient of the free $\mathbb{Z}$-module generated by $G\backslash\pi_1N/G$ (see Definitions \ref{def2.2}, \ref{def3.21}), where $G\backslash\pi_1N/G$ denotes the set of double cosets $\{GgG|g\in\pi_1N\}$. The following result is the main theorem of the paper. Here, the map $p$ is given by \eqref{eqn_pi0_emb_to_map}.

\begin{theorem}\label{thm1.1}
    There exists the following commutative diagram:
\begin{center}
\begin{tikzcd}
& \mathscr{H}^f \arrow[r, "\mu"] \arrow[d,""]
& \mathbb{A}_{[f]} \arrow[d, ""] \\
p^{-1}([f]) \arrow[r, "\cong"] & \mathscr{H}^f/\mathscr{H}^f_0 \arrow[r, "\overline\mu"]
& \mathbb{A}_{f}
\end{tikzcd}
\end{center}
where $\mu$ is the intersection number (Definition \ref{def2.5}), $\mathscr{H}^f_0$ acts on $\mathscr{H}^f$ and $\mathbb{A}_{[f]}$ inducing the map $\overline\mu$ (Proposition \ref{prop3.25}), which is a bijection. And there is a bijection $p^{-1}([f])\rightarrow\mathscr{H}^f/\mathscr{H}^f_0$ and $\mathbb{A}_{f}$ (Theorem \ref{thm3.8}).
And the induced map (Definition \ref{def4.1})
$$\textup{fq}:p^{-1}([f])\rightarrow \mathbb{A}_f$$
is a bijection.
\end{theorem}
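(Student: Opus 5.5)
The plan is to assemble the bijection $\textup{fq}$ from three ingredients: a geometric identification of $p^{-1}([f])$ with $\mathscr{H}^f/\mathscr{H}^f_0$, a complete classification of homotopies up to the relevant equivalence by $\mu$, and an equivariance computation that lets $\mu$ descend to the quotient. First I would establish the bottom-left bijection. Any $f'$ homotopic to $f$ determines a homotopy $H:\Sigma\times I\to N$ starting at $f$, and changing $H$ by a homotopy of homotopies, or precomposing with a self-homotopy of $f$, changes nothing at the level of endpoints. Conversely, two homotopies ending at isotopic embeddings differ, after concatenating with the isotopy (which is a homotopy), by an element of $\mathscr{H}^f_0$. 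This gives a well-defined surjection $\mathscr{H}^f/\mathscr{H}^f_0\to p^{-1}([f])$, and injectivity is essentially built into the definitions; this is the content of Theorem \ref{thm3.8}.

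Next I would use the intersection invariant $\mu$ of Definition \ref{def2.5}. After perturbing a homotopy $H$ to a generic track $\Sigma\times I\to N\times I$, its self-intersections are isolated double points, since $3+3=6=\dim(N\times I)$. Each double point is recorded with a sign and a group element taken in $G\backslash\pi_1N/G$; the double coset records the ambiguity of choosing whiskers through $\Sigma$, and this is why the target is $\mathbb{A}_{[f]}$. The key claim is that $\mu(H)=\mu(H')$ for $H,H'$ with the same endpoints holds if and only if $H$ and $H'$ end at isotopic embeddings, modulo the indeterminacy already quotiented into $\mathbb{A}$.

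The ``if'' direction is a cobordism or invariance argument. For the ``only if'' direction, equal $\mu$ means the double points of the track of the concatenation $H\cdot\bar H'$ pair up with opposite signs and equal group elements. I would then perform Whitney moves in the 6-dimensional track, where $3+3=6$ and the codimension is $3$, so the Whitney disks can be embedded and framed. This removes all double points and yields a concordance, which Hudson's theorem (concordance implies isotopy in codimension $\ge3$) upgrades to an isotopy.

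I would then check, as in Proposition \ref{prop3.25}, that $\mathscr{H}^f_0$ acts on $\mathbb{A}_{[f]}$ compatibly with $\mu$. Precomposing by a self-homotopy $K$ shifts $\mu(H)$ by $\mu(K)$ twisted by the action, so $\mu$ descends to $\overline\mu:\mathscr{H}^f/\mathscr{H}^f_0\to\mathbb{A}_f$, where $\mathbb{A}_f$ is the quotient by this action. Surjectivity of $\overline\mu$ follows from realizing each generator $\pm[g]$ by a finger move of $f$ along a loop representing $g$, which produces one double point with the prescribed sign and group element. Injectivity follows from the Whitney-move argument above. Finally, $\textup{fq}$ is the composite of $\overline\mu$ with the inverse of the bijection $p^{-1}([f])\to\mathscr{H}^f/\mathscr{H}^f_0$, so it is a bijection.

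The main obstacle I expect is computing exactly the image subgroup $\mu(\mathscr{H}^f_0)$, that is, determining which values self-homotopies can realize. Self-homotopies arise from $\pi_1\textup{Map}(\Sigma,N)$ based at $f$, including loops that sweep $\Sigma$ around elements of $\pi_2N$ and $\pi_3N$, and maps $\Sigma\times S^1\to N$ built from the skeleton of $\Sigma$. My first attempt would be to decompose a self-homotopy along a cell structure of $\Sigma\times S^1$ and compute the contribution of each cell. The 1-cells and 2-cells should give contributions through the $G$-action and the intersection pairing with $\pi_2$-classes, while the top cell should give a contribution through $\pi_3N$ via a Hopf-type self-intersection term.
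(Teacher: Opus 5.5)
Your outline is essentially the paper's proof. It uses the bijection $p^{-1}([f])\cong\mathscr{H}^f/\mathscr{H}^f_0$ of Theorem \ref{thm3.8}, completeness of $\mu$ via Whitney moves plus Hudson's concordance theorem (Theorems \ref{thm2.4} and \ref{thm2.3}), and descent of $\mu$ through the affine action $a\mapsto\mu(K)+s_K a s_K^{-1}$ of self-homotopies, where $s_K$ is the basepoint track. The paper only splits this action into the skeletal stages $\mathscr{G}_2\cong\pi_3N$, $\mathscr{G}_1\cong\ker\iota$, $\mathscr{G}_0\cong\ker\eta$ that your last paragraph anticipates; note that the top-cell contribution there is $\mu(S)+\overline{\lambda}_N(S,f)$, not only a self-intersection term. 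Your finger-move argument for surjectivity makes explicit a step the paper leaves to its $g\cdot i$ construction.

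Two small fixes. Restate your ``key claim'' for homotopies with a common initial map $f$: equality in $\mathbb{A}_{[f]}$ forces isotopic endpoints, while the converse holds only in $\mathbb{A}_f$. Also, double points with trivial double coset are cancelled against a cusp, as in the proof of Theorem \ref{thm2.4}.
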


Theorem \ref{thm1.1} has the following immediate corollary.

\begin{corollary}
	\label{cor_detect_isotopy}
	Suppose $f':\Sigma\hookrightarrow N$ is homotopic to $f$. Then $f$ and $f'$ are isotopic if and only if $\textup{fq}(f')=0\in\mathbb{A}^f$.
\end{corollary}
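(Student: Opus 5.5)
The corollary follows directly from the bijectivity of $\textup{fq}$ in Theorem \ref{thm1.1}, together with the observation that $\textup{fq}$ sends the isotopy class of $f$ itself to $0$. I will use $\mathbb{A}^f$ and $\mathbb{A}_f$ interchangeably for the target set of the invariant.

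First, I will identify $\textup{fq}([f])$. Under the bijection $p^{-1}([f])\cong \mathscr{H}^f/\mathscr{H}^f_0$, the class $[f]$ corresponds to the orbit of the constant homotopy at $f$. The constant homotopy is an isotopy, so it has no double points and its intersection number $\mu$ (Definition \ref{def2.5}) vanishes in $\mathbb{A}_{[f]}$. Commutativity of the diagram in Theorem \ref{thm1.1} then gives that $\overline\mu$ of this orbit is the image of $0$ in $\mathbb{A}_f$. This image is $0$, since the quotient map $\mathbb{A}_{[f]}\to\mathbb{A}_f$ is induced by the $\mathscr{H}^f_0$-action and the quotient of a module by a submodule-type relation keeps $0$ as the class of $0$. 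Therefore $\textup{fq}([f])=0$ (Definition \ref{def4.1}).

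Next come the two directions. If $f'$ is isotopic to $f$, then $[f']=[f]$ in $\pi_0\textup{Emb}(\Sigma,N)$, and since $\textup{fq}$ is a function on isotopy classes, $\textup{fq}(f')=\textup{fq}(f)=0$. Conversely, suppose $f'$ is homotopic to $f$, so that $[f']\in p^{-1}([f])$, and suppose $\textup{fq}(f')=0=\textup{fq}(f)$. Injectivity of $\textup{fq}$ from Theorem \ref{thm1.1} gives $[f']=[f]$ in $\pi_0\textup{Emb}(\Sigma,N)$, which means $f$ and $f'$ are isotopic.

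The only point needing care is the identification of the basepoint, namely that $\textup{fq}(f)$ is genuinely $0$ and not merely some fixed element. This depends on how Definition \ref{def4.1} normalizes the invariant. If the quotient $\mathbb{A}_f$ were only a set rather than a pointed quotient module, I would instead define "$0$" as $\textup{fq}(f)$. The argument above shows that this agrees with the class of $0$, because the constant homotopy contributes no self-intersections. Everything else is formal from the bijectivity of $\textup{fq}$.
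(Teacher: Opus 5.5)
Your proof is correct and is essentially the argument the paper has in mind when it calls this an immediate corollary of Theorem \ref{thm1.1}: $\textup{fq}(f)$ is the class of $\mu([E])=0$ for the constant homotopy $E$, and injectivity of $\textup{fq}$ (equivalently, Proposition \ref{prop3.25} combined with Theorem \ref{thm3.8}) gives the converse. One small imprecision: the last step $\mathbb{A}_{f_*}\to\mathbb{A}_f$ is an orbit space under the affine action $\zeta(s,a)=sas^{-1}+U_s$, not a quotient by a submodule, so ``$0$'' in $\mathbb{A}_f$ means the orbit $\{U_s : s\in\mathscr{G}_0\}$ of $0$, which is exactly the normalization your last paragraph adopts.
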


Note that Theorem \ref{thm1.1} and Corollary \ref{cor_detect_isotopy} do not require any assumptions on the existence of dual spheres. In Section \ref{sec_application}, we will show that if $f$ admits an algebraic dual sphere or if $\pi_1(N)$ is trivial, then $\mathbb{A}_f$ is trivial, and hence every embedding that is homotopic to $f$ must be isotopic to $f$. Here, we say that $f$ admits an algebraic dual sphere if there exists an element in the image of $\pi_3(N)\to H_3(N)$ that has algebraic intersection number $1$ with the fundamental class of $f$. The result is summarized in the following corollary.
\begin{corollary}\label{Coro1.3}
    Let $\Sigma$ be a compact oriented 2-surface and $N$ be a compact oriented 5-manifold. Let $f:\Sigma\rightarrow N$ be an embedding. Then if $N$ is simply connected, or if $f$ admits an algebraic dual sphere, or if the homomorphism $f_*:\pi_1\Sigma\rightarrow\pi_1N$ is surjective, then every embedding $f'$ that is homotopic to $f$ is actually isotopic to $f$.
    \label{corollary3}
\end{corollary}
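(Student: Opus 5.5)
By Theorem \ref{thm1.1}, $\textup{fq}:p^{-1}([f])\to\mathbb{A}_f$ is a bijection, and $\textup{fq}(f)=0$. So the corollary follows once we show that $\mathbb{A}_f$ is the trivial group under each of the three hypotheses; Corollary \ref{cor_detect_isotopy} then says every $f'$ homotopic to $f$ is isotopic to $f$. The work is to unwind Definitions \ref{def2.2} and \ref{def3.21}. Recall that $\mathbb{A}_f$ is $\mathbb{Z}[G\backslash\pi_1N/G]$ modulo relations. Some relations are intrinsic, coming from the indeterminacy of the self-intersection invariant $\mu$: the $g\sim g^{-1}$ symmetry and the relation killing the trivial double coset, which absorbs the choice of whiskers, framings and local cusps. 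The others come from the image of the action of $\mathscr{H}^f_0$ (Proposition \ref{prop3.25}). In each case I would show that these relations kill every generator $[GgG]$.

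\emph{Surjective $f_*$.} Here $G=\pi_1N$, so $G\backslash\pi_1N/G$ is a single point, namely the trivial double coset $G\cdot 1\cdot G$. Hence $\mathbb{Z}[G\backslash\pi_1N/G]\cong\mathbb{Z}$, generated by the trivial coset. I would then check that the trivial-coset component of $\mu$ is not an isotopy invariant: it can be altered arbitrarily. One way is to insert a self-homotopy of $f$ that creates a local finger move and Whitney move, or a cusp. Another is to use the $\mathscr{H}^f_0$ action. Either way the $\mathbb{Z}$ summand is quotiented out, so $\mathbb{A}_f=0$.

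\emph{Simply connected $N$.} This is the special case $\pi_1N=1$ of the previous argument, since then $f_*$ is trivially surjective.

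\emph{Algebraic dual sphere.} This is the substantive case and the main obstacle. Let $S\in\pi_3(N)$ with $S\cdot[f]=1$. I would build, for each $g\in\pi_1N$, a self-homotopy of $f$ in $\mathscr{H}^f_0$:
\begin{itemize}
\item[(1)] choose a point $x\in\Sigma$ and a whisker path representing $g$;
\item[(2)] tube a $2$-parameter family (a sweep) of $f$ near $x$ around the $3$-sphere $S$ along this path;
\item[(3)] the resulting track is a map $\Sigma\times I\to N\times I$ whose double points with the original sheets correspond to intersections of $S$ with $f(\Sigma)$.
\end{itemize}
Computing $\mu$ of this self-homotopy should give $\pm (S\cdot[f])\,[GgG]$ plus terms in the trivial coset, that is, $\pm[GgG]$ modulo the intrinsic relations. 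Since Proposition \ref{prop3.25} identifies the image of $\mu(\mathscr{H}^f_0)$ with the relations defining $\mathbb{A}_f$, every generator dies and $\mathbb{A}_f=0$.

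The difficult step is this computation of $\mu$ for the self-homotopy built from $S$. One must track the intersections of $S$ with $f$ counted with group elements. Because $S$ is only an \emph{algebraic} dual, these intersections have the form $\sum_h n_h h$ with $\sum_h n_h=1$. The needed point is that after projecting to $\mathbb{Z}[G\backslash\pi_1N/G]$ these $h$ coalesce appropriately. Intersections whose group elements lie in $G$, for example by sliding along loops in $f(\Sigma)$, contribute to the single coset $[GgG]$ with total coefficient $1$. Establishing this coalescing — that loops in $\Sigma$ let us move the intersection points of $S$ without changing their double coset — is exactly why the double-coset quotient is used. I expect this to be where the care lies.
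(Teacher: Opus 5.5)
Your reduction to showing $\mathbb{A}_f=0$ agrees with the paper, and so does your treatment of the cases $\pi_1N=1$ and $f_*$ onto. These two cases are in fact immediate. Since $\overline{1}=0$ is one of the defining relations of $\mathbb{A}_{[f]}$ (Definition \ref{def2.2}), $\mathbb{A}_{[f]}$ is already zero when $G\backslash\pi_1N/G$ is a single point, and no finger moves or cusps need to be produced.

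In the dual-sphere case, sweeping a small disk of $f$ around $g\cdot S$ gives exactly the element of $\mathscr{G}_2\cong\pi_3(N)$ corresponding to $g\cdot S$. This is also what the paper uses. However, your evaluation of $\mu$ on this element has a genuine gap. By the lemma preceding Lemma \ref{lemma3.12}, the value is $\phi(g\cdot S)=\mu(g\cdot S)+\overline{\lambda}_N(g\cdot S,f)$. You have dropped the first term, which comes from double points of the swept $3$-sphere with itself. Nothing in your argument puts it in the trivial coset; the paper disposes of it only by taking $\mu([S])=0$ as part of what a dual sphere means.

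The second term is where your coalescing step fails. If $\lambda_N(S,f)=\sum_h n_h h$, then $\overline{\lambda}_N(g\cdot S,f)=\sum_h n_h\,[GghG]$. Since $S$ is simply connected, sliding an intersection point along a loop in $f(\Sigma)$ only replaces $h$ by $hk$ with $k\in G$. So points whose group elements lie in different cosets $hG$ never merge: knowing $\sum_h n_h=1$ does not give $[GgG]$, and the span of these elements over all $g$ can be a proper subgroup.

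For example, take $\pi_1N=\langle t\rangle\cong\mathbb{Z}$, $G=1$, $\mu(S)=0$ and $\lambda_N(S,f)=2t-t^3$. Your self-homotopies then only produce (integer combinations of) the elements $t^k(2t-t^3)$. Consider the homomorphism $\mathbb{Z}[t^{\pm1}]\to\mathbb{Z}/3$ sending $t^j$ to $0$ for $j$ even, to $1$ for $j\equiv 1 \pmod 4$, and to $-1$ for $j\equiv 3 \pmod 4$. It vanishes on $1$, on every $t^j+t^{-j}$ and on every $t^k(2t-t^3)$, but not on $t$. So these self-homotopies alone do not kill $t$.

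What the paper's proof actually uses is the equivariant condition $\lambda_N(S,f)=1$, together with $\mu(S)=0$. Then $\phi(g\cdot S)=[GgG]$ for every $g$, so $\phi$ is onto, $\mathbb{A}_{f_1}=0$, and hence $\mathbb{A}_f=0$. The introduction phrases the hypothesis as an integer intersection number, which is the reading you adopted, but the proof needs the group-ring version. With only integer intersection number $1$, your argument does not close.
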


In contrast, the next corollary shows that there are examples $f:\Sigma\hookrightarrow N$ such that there exist infinitely many embeddings $f':\Sigma\hookrightarrow N$ that are homotopic but not isotopic to $f$.
\begin{corollary}\label{Coro1.4}
    Let $\Sigma$ be a compact oriented 2-surface and $N$ be a compact oriented 5-manifold. Let $f:\Sigma\rightarrow N$ be an embedding and $G\coloneqq f_*(\pi_1\Sigma)\in\pi_1N$. If $\pi_2N,\pi_3N$ are all trivial and there are infinitely many conjugacy classes in the set of cosets $G\backslash\pi_1N/G$ under the conjugate action of $G$, then there exist infinitely many embeddings $f':\Sigma\rightarrow N$ which are homotopic to $f$ but not isotopic to $f$.
\end{corollary}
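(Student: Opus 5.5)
By Theorem \ref{thm1.1}, $\textup{fq}:p^{-1}([f])\to\mathbb{A}_f$ is a bijection. So it suffices to show that $\mathbb{A}_f$ is infinite under the stated hypotheses. Distinct elements of $\mathbb{A}_f$ then give infinitely many embeddings $f'$ homotopic to $f$ and pairwise non-isotopic, hence infinitely many not isotopic to $f$ (at most one of them is).

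The plan is to unpack the definition of $\mathbb{A}_f$ (Definitions \ref{def2.2}, \ref{def3.21}). It is the free $\mathbb{Z}$-module $\mathbb{Z}[G\backslash\pi_1N/G]$ modulo two kinds of relations, coming respectively from the relations built into $\mathbb{A}_{[f]}$ and from the action of $\mathscr{H}^f_0$ (Proposition \ref{prop3.25}).

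\textbf{Relations in $\mathbb{A}_{[f]}$.} These come from the indeterminacy of the intersection number $\mu$. This indeterminacy includes
\begin{itemize}
\item the symmetry $GgG\sim \pm Gg^{-1}G$ from swapping the two sheets at a double point;
\item contributions from changing a homotopy by elements of $\pi_2N$ and $\pi_3N$: tubing into spheres, or changing the track of the homotopy by a $3$-sphere, shifts $\mu$ by intersection-type terms.
\end{itemize}

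\textbf{Relations from $\mathscr{H}^f_0$.} Self-homotopies of $f$ are generated by loops of maps $\Sigma\to N$. Homotopically, these are built from $\pi_1$-actions (moving the basepoint or $1$-skeleton around loops, i.e.\ conjugation by elements of $G$ or of the centralizer) together with $\pi_2N$- and $\pi_3N$-valued modifications on the $2$- and $1$-cells of $\Sigma\times S^1$.

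With $\pi_2N=\pi_3N=0$, all terms involving spheres vanish. The surviving relations are then only the sign/inversion symmetry and the conjugation action of $G$ on double cosets, $GgG\mapsto G hgh^{-1}G$ for $h\in G$.

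The key step is to verify carefully, from the proof of Proposition \ref{prop3.25}, that these are the only relations when $\pi_2$ and $\pi_3$ vanish. Granting this, $\mathbb{A}_f$ surjects onto (or equals) a free abelian group, possibly modulo $2$-torsion from the inversion symmetry. This group is generated by orbits of $G$-conjugacy classes of double cosets, further identified under $g\mapsto g^{-1}$.

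There are infinitely many such conjugacy classes by hypothesis, and the inversion identification is at most two-to-one. So infinitely many generators survive. To be safe, I would use only the elements $x_i=[Gg_iG]$ for infinitely many pairwise inequivalent classes, each nonzero in the quotient (even if some were $2$-torsion, they would still be distinct). This gives infinitely many distinct values of $\textup{fq}$, and hence infinitely many embeddings homotopic but not isotopic to $f$.

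The main obstacle is showing that the $\mathscr{H}^f_0$-action introduces no further relations, for example from self-homotopies that sweep $\Sigma$ around elements of $\pi_1N$ commuting with $G$. These produce affine shifts rather than linear relations. I would handle this by computing $\mu$ of the generating self-homotopies over the cells of $\Sigma\times S^1$, and checking that, when $\pi_2N=\pi_3N=0$, each contribution lies in a span of terms involving only the trivial double coset or sphere classes. Those terms can then be ignored by restricting to nontrivial double cosets.
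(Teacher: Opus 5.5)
\textbf{Verdict: genuine gap.} Your skeleton is the paper's. By the bijection $\textup{fq}$ it suffices to show that $\mathbb{A}_f$ is infinite. Since $\pi_3N=0$, we get $\mathscr{G}_2\cong\pi_3N=0$, so $\phi=0$. Since $\pi_2N=0$, we get $\ker r_1\cong\pi_2(N)^{2g}=0$, so $\mathscr{G}_1=0$ and $\psi=0$. Hence $\mathbb{A}_f$ is the orbit set of $\mathbb{A}_{[f]}$ under the affine action $\zeta$ of $\mathscr{G}_0\cong\ker\eta$. Your handling of this last action, however, has two problems.

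\textbf{The residual action is misidentified.} Conjugation by $h\in G$ fixes every double coset, because $Ghgh^{-1}G=GgG$, so it imposes no relation at all. The group that actually acts is $\ker\eta$. This is the entire centralizer $C$ of $G$, because $\eta$ takes values in $\pi_2N/\textup{im}(\iota)=0$. It acts by $a\mapsto sas^{-1}+U_s$. So what matters is the number of $C$-conjugacy classes of double cosets. Also, $\mathbb{A}_f$ is an orbit set, not a quotient group that ``surjects onto a free abelian group''.

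\textbf{The key step you propose is false in general.} You plan to show that each $U_s$ is supported on the trivial double coset. Since $\mathscr{G}_2=\ker r_1=0$, a self-homotopy of $f$ is determined up to homotopy by its basepoint track, so $U_s$ can be computed from any representative. Here is a counterexample.
\begin{enumerate}
\item Take $N=\Sigma_2\times T^3$ and let $f_0$ be a torus in a small ball. Then $G=1$ and $C=\pi_1N$. Dragging the ball around $s$ gives a self-isotopy with track $s$, so $U^{f_0}\equiv0$.
\item Let $f=g\cdot f_0$, obtained by tubing into a meridian along $g$. Let $K$ be the obvious homotopy from $f_0$ to $f$, with $\mu(K)=a=\pm\overline{g}$.
\item Then $K^{-1}\cdot(\text{drag})\cdot K$ is a self-homotopy of $f$ with track $s$, so $U^f_s=sas^{-1}-a$.
\item This is nonzero whenever $sgs^{-1}\neq g^{\pm1}$, e.g.\ for two standard generators of $\pi_1\Sigma_2$.
\end{enumerate}

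The paper's own proof does not discuss $U_s$ at all. It passes directly from $\mathbb{A}_f=\mathbb{A}_{[f]}/\ker\eta$ to ``infinitely many conjugacy classes give infinitely many orbits''. So you were right to flag this point, but the argument has to tolerate nonzero shifts.

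\textbf{A fix when $C$ is finitely generated}, say by $s_1,\dots,s_k$:
\begin{enumerate}
\item The crossed-homomorphism identity $U_{st}=U_s+sU_ts^{-1}$ (proof of Proposition \ref{prop3.25}) confines every $U_s$ to the span of the finitely many $C$-classes, closed under inversion, that meet the supports of $U_{s_1},\dots,U_{s_k}$.
\item Projecting $\mathbb{A}_{[f]}$ away from these classes is $C$-equivariant and kills all $U_s$.
\item Hence basis elements $\overline{g}$ taken from distinct remaining classes (up to inversion) lie in distinct orbits, and there are infinitely many such classes.
\end{enumerate}
If $C$ is not finitely generated, some further argument is needed.
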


{\bf Acknowledgement.} The author would like to express sincere gratitude to Prof. Boyu Zhang for constructive suggestions and discussions. The author also thanks Jianning Fu for important advice.

\section{Intersection Number of Homotopies up to Homotopy}

In this section, we will define a homotopy invariant of homotopies to show when a homotopy is homotopic to an isotopy. The main idea comes from the Wall's intersection number \cite{Wall1966}, which is defined to be an obstruction to a given immersion $f:S^n
\rightarrow N^{2n}$ regularly homotopic to an embedding. The obstruction is complete when $n\geq3$, which is exactly our case. Concretely, the invariant relates the intersections of the immersion with loops in $\pi_1N$, generated by the two arcs from the basepoint of $N$ to the intersection point along the two spheres or the two sheets of a sphere. Here, we will use a conjugacy version of the intersection number since the immersed manifolds may not be simply-connected. Later in Definition \ref{def2.5}, we will give a more detailed definition.\par

We start with several definitions:\par
\begin{definition}\label{def2.1}
    Let $f,f':\Sigma\hookrightarrow N$ be two embeddings from a compact oriented 2-surface $\Sigma$ to a compact, oriented 5-manifold $N$. If there is a homotopy $H:\Sigma\times I\rightarrow N$ such that $H_0=f$, $H_1=f'$, we call $H$ a homotopy connecting $f$ and $f'$, or from $f$ to $f'$. If we regard the homotopy $H$ as a map $H:\Sigma\times I\rightarrow N\times I$ that fixes the $I$ coordinate, then we say two such homotopies $H$ and $H'$ are equivalent if and only if $H_0=H'_0$, $H_1$ is isotopic to $H'_1$, and there exists a homotopy $J:\Sigma\times I\times I\rightarrow\Sigma\times I$, such that $J|_{\Sigma\times I\times \{0\}}=H$ and $J|_{\Sigma\times I\times\{1\}}=H'$ and $J|_{\Sigma\times \{0\}\times I}=f$. Denote by $[H]$ the equivalence class of a homotopy $H$. We call a homotopy $H:\Sigma\times I\rightarrow N$ a homotopy of an embedding $f:\Sigma\hookrightarrow N$ if $H_0=f$. Denote by $\mathscr{H}^f$ the space of the homotopy equivalence classes of the homotopies of $f$.
\end{definition}

\begin{definition}\label{def2.2}
    Let $N$ be a compact oriented 5-manifold. Let $\Sigma$ be a compact oriented 2-surface. If $F$ is an immersion $\Sigma\times I\looparrowright N\times I$ and $G\coloneq F_*(\pi_1(\Sigma\times I))\in\pi_1(N\times I)$. Define the group $\mathbb{A}_{[F]}$ as the quotient $\mathbb{Z}G\backslash\pi_1(N\times I)/G\bigg/{\langle\overline{1},\overline{g}+\overline{g}^{-1}\rangle}$, where $\mathbb{Z}G\backslash\pi_1(N\times I)/G$ is the $\mathbb{Z}$ module of the double coset $G\backslash\pi_1(N\times I)/G$. Since $\Sigma\times {0}$ is a deformation retract of $\Sigma$ and $N\times \{0\}$ is a deformation retract of $N\times I$, let $f=F|_{\Sigma\times\{0\}}$, we also denote $\mathbb{A}_{[F]}$ by $\mathbb{A}_{[f]}$.
\end{definition}

In this section, we will establish a map $\mu$ from $\mathscr{H}^f$ to $\mathbb{A}_{[f]}$ and show that this intersection number is also an obstruction.

Thus, the main result of this section is:

\begin{theorem}\label{thm2.3}
    Consider the map $$\mu:\mathscr{H}^f\rightarrow\mathbb{A}_{[f]},$$
    in Definition \ref{def2.5}. A homotopy $H:\Sigma\times I \rightarrow N\times I$ of $f$ is homotopic relative to boundary to an isotopy if and only if the intersection number $\mu([H])\in \mathbb{A}_{[H]}=\mathbb{A}_{[f]}$ vanishes.
\end{theorem}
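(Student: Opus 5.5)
We view a homotopy $H$ of $f$ as a level-preserving map $\Sigma\times I\to N\times I$, i.e.\ a $3$-manifold mapping into a $6$-manifold, and put it in general position rel boundary. Since $H_0=f$ and $H_1$ are embeddings, a generic such map is an immersion of $\Sigma\times I$ into $N\times I$ whose self-intersections are isolated transverse double points in the interior. The invariant $\mu([H])$ of Definition \ref{def2.5} is the Wall-type count of these double points. For a double point $p=H(x_1,t)=H(x_2,t)$ we choose a whisker from the basepoint and form the loop that runs out along one sheet to $p$ and back along the other. Because the sheets are not simply connected, the choice of paths in $\Sigma\times I$ changes this loop by left and right multiplication by $G$, so the loop is only well defined in $G\backslash\pi_1N/G$. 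Swapping the sheets replaces $g$ by $g^{-1}$ and reverses the sign, since $\dim=3$ is odd. The relation $\overline 1$ records that double points with trivial group element can be created or removed by local cusp and finger moves.

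\textbf{Step 1: well-definedness on $\mathscr{H}^f$.} If $J$ is an equivalence between $H$ and $H'$, it can be made a generic map of the $4$-manifold $\Sigma\times I\times I$ into the $7$-manifold $N\times I\times I$. Its double-point set is then a $1$-manifold, and its boundary consists of the double points of $H$ and of $H'$, plus local cusp or birth points. Arcs of double points pair up points of $H$ and $H'$ with the same double coset and sign. The birth points contribute terms of the form $\overline 1$ or $\overline g+\overline g^{-1}$. Hence $\mu(H)=\mu(H')$ in $\mathbb{A}_{[f]}$. The end condition $H_1\simeq H'_1$ by an isotopy is absorbed by concatenating with that isotopy, which has no double points.

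\textbf{Step 2: isotopy implies $\mu=0$.} An isotopy is an embedding of $\Sigma\times I$, so it has no double points. By Step 1, any homotopy homotopic rel boundary to an isotopy therefore has $\mu=0$.

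\textbf{Step 3: the converse, and the main obstacle.} Suppose $\mu(H)=0$. Then the double points can be paired: each pair has opposite signs and equal classes in $G\backslash\pi_1N/G$, possibly after using $g\sim g^{-1}$ by swapping sheets. Double points with trivial class are handled as usual. For each pair we choose a Whitney circle made of arcs on the two sheets so that it is null-homotopic in $N\times I$. This is exactly the point where matching double cosets allows us to adjust the arcs through loops in the sheets representing elements of $G$. Since $\dim(N\times I)=6\ge 5$ and the circle is $1$-dimensional, it bounds an embedded Whitney disk with interior disjoint from the immersion. Its framing can be corrected because the relevant obstruction lies in $\pi_1(SO)$ and can be fixed by boundary twisting in this codimension. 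The Whitney move (Wall's, valid for $3$-manifolds in $6$-manifolds) then removes each pair by a regular homotopy supported away from the boundary. The resulting map is an embedding $\Sigma\times I\hookrightarrow N\times I$ rel boundary, but it is not necessarily level-preserving.

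The hard step is converting this concordance into a level-preserving isotopy. Here I would invoke Hudson's theorem that concordance implies isotopy in codimension $\ge 3$; our codimension is $5-2=3$. The isotopy must also remain homotopic rel boundary to the original $H$. This holds because the Whitney moves and the Hudson straightening are both realized by homotopies rel $\Sigma\times\partial I$. Care is needed to track the level structure: I would first make $H$ a level-preserving generic map, perform the Whitney moves in the $6$-manifold, and then apply Hudson's theorem. The last step is checking that the composite homotopy is an equivalence in the sense of Definition \ref{def2.1}.
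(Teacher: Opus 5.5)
Your proposal is correct and follows essentially the paper's route. Necessity comes from the homotopy invariance of $\mu$, which your Step 1 justifies more carefully than the paper does, via the $1$-dimensional double-point set of a generic track. Sufficiency comes from pairing double points with equal double cosets (using cusps for the $\overline 1$ terms), cancelling them by Whitney moves in the $6$-manifold $N\times I$ to get an embedding rel boundary, and then applying Hudson's concordance-implies-isotopy theorem in codimension $3$.

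One minor correction: Hudson's diffeotopy $J_s$ is fixed only on $N\times 0$, so it is not a homotopy rel $\Sigma\times\partial I$ as you claim. Instead, $t\mapsto J_{1-t}|_{N\times 1}\circ f'$ is the required level-preserving isotopy from $f$ to $f'$. The square $(x,t,s)\mapsto J_s(G(x,t))$ then shows it is homotopic rel ends to the concordance $G$ obtained from the Whitney moves.
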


There are two main differences between our intersection number here and the original version of the Wall's intersection invariant. Firstly, the Wall's intersection number is a regular homotopy invariant, but what we want here is a homotopy invariant. And secondly, the Wall's invariant is valid for embeddings of simply-connected manifolds, but $\pi_1(\Sigma)$ may not be trivial.\par
For the first question, since we only consider the embeddings of 3-manifolds to 6-manifolds, the only difference between homotopy and regular homotopy for homotopy between two immersions is the cusp homotopy, see Figure \ref{fig:figure6}.

\begin{figure}
    \begin{overpic}[scale=0.7]{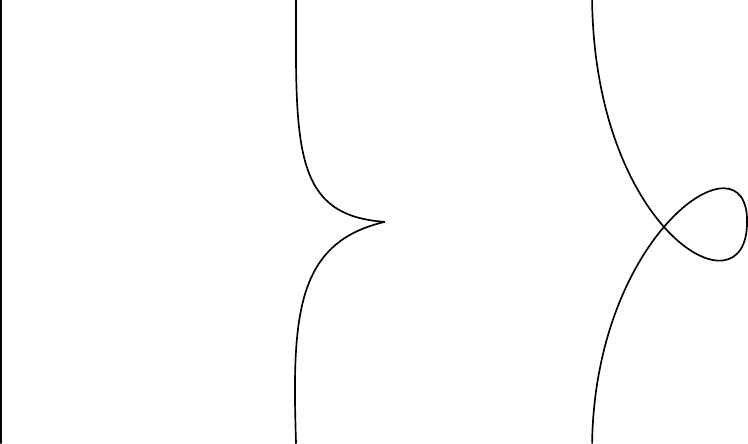}
	\end{overpic}
    \caption{A cusp homotopy}
    \label{fig:figure6}
\end{figure}

More concretely, we have the following lemma:
\begin{lemma}
    Let $\Sigma$ be a compact oriented 2-surface. Let $N$ be an oriented compact 5-manifold. Denote by $q:\pi_0\textup{Imm}(\Sigma\times I,N\times I)\rightarrow\pi_0\textup{Map}(\Sigma\times I,N\times I)$ the projection map from the regular homotopy class to the homotopy classes of immersions fixing the boundary. Then for a given immersion $F:\Sigma\times I\rightarrow N\times I$, the regular homotopy classes $q^{-1}([F])\cong\mathbb{Z}$ which are given by cusp homotopies on the immersion $F$.
\end{lemma}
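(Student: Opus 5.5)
We would prove this with the Smale--Hirsch theorem. Put $M=\Sigma\times I$ (dimension $3$) and $W=N\times I$ (dimension $6$). Since all maps fix the boundary, we work relative to $\partial M$. Smale--Hirsch identifies regular homotopy classes of immersions $M\looparrowright W$ (rel boundary) with homotopy classes of bundle monomorphisms $TM\to TW$ covering a map $M\to W$.

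The first step is to fix the underlying map $F$ and study the fibre of $q$. Once $F$ is fixed, $q^{-1}([F])$ is identified with the homotopy classes of sections, rel $\partial M$, of the bundle over $M$ whose fibre is the Stiefel manifold $V_3(\mathbb{R}^6)$ of $3$-frames in $\mathbb{R}^6$. This is accurate up to the action of $\pi_1$ of the mapping space, which we discuss below.

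The second step is the homotopy of the fibre. We have $\pi_i(V_3(\mathbb{R}^6))=0$ for $i\le 2$ and $\pi_3(V_3(\mathbb{R}^6))\cong\mathbb{Z}$, because $V_{k}(\mathbb{R}^n)$ is $(n-k-1)$-connected and the first nonzero group is $\mathbb{Z}$ when $n-k$ is even.

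The third step is obstruction theory. $M$ is a compact $3$-manifold, and we work relative to its boundary. The difference obstructions between two sections therefore lie only in top degree, in
$$H^3(M,\partial M;\pi_3 V_3(\mathbb{R}^6))\cong H^3(\Sigma\times I,\partial;\mathbb{Z})\cong\mathbb{Z},$$
and the local coefficients are untwisted because everything is oriented. It follows that the relative sections, and hence the regular homotopy classes over $[F]$, form a $\mathbb{Z}$-torsor.

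The fourth step identifies the generator. A cusp homotopy, i.e.\ the local model of Figure \ref{fig:figure6} inserted in a small ball of $M$, changes the difference class by $\pm1$. This is the standard computation that the local cusp changes the normal Euler number, equivalently the self-intersection count, by one. So cusp homotopies act transitively and freely on $q^{-1}([F])$.

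The main obstacle is well-definedness, i.e.\ freeness. The identification of $q^{-1}([F])$ is really with the orbits of $\pi_1(\mathrm{Map}(M,W),F)$ acting on the $\mathbb{Z}$-torsor, and a self-homotopy of $F$ could in principle shift the difference class. To rule this out, we would show that the shift is detected by an integer-valued regular homotopy invariant that is unchanged by such loops but changed by a cusp, namely the normal Euler class / Wall-type self-intersection count mod the boundary. Alternatively, we would note that a loop of maps induces, via the derivative, a loop of bundle maps whose obstruction in $H^3(M,\partial M;\mathbb{Z})$ vanishes because it factors through a map of degree zero on the top cell. If that fails, we would restrict to the statement up to this action, which is enough for the later use.
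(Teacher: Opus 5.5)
\textbf{There is a genuine gap: $\pi_3(V_3(\mathbb{R}^6))$ is $\mathbb{Z}/2$, not $\mathbb{Z}$.} Your route matches the paper's sketch: Smale--Hirsch, sections of a bundle with fibre $V_3(\mathbb{R}^6)$, obstructions concentrated in the top cell, and a cusp realizing the generator. But the value of $\pi_3$ that everything rests on is wrong, and the paper asserts the same false value, so agreeing with it does not help.

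The rule you quote gives $\mathbb{Z}$ only when $n-k$ is even (or $k=1$). Here $n-k=6-3=3$ is odd and $k=3\ge 2$, and in that case the first nonvanishing group is $\mathbb{Z}/2$. Concretely:
\begin{itemize}
\item The fibration $V_2(\mathbb{R}^5)\to V_3(\mathbb{R}^6)\to S^5$ gives $\pi_3V_3(\mathbb{R}^6)\cong\pi_3V_2(\mathbb{R}^5)$.
\item $V_2(\mathbb{R}^5)$ is the unit tangent bundle of $S^4$, whose boundary map $\pi_4(S^4)\to\pi_3(S^3)$ is multiplication by $\chi(S^4)=2$, so $\pi_3V_2(\mathbb{R}^5)\cong\mathbb{Z}/2$.
\item Equivalently, regular homotopy classes of immersions $S^3\looparrowright\mathbb{R}^6$ form $\mathbb{Z}/2$, detected by Whitney's mod $2$ double-point count.
\end{itemize}
So your third step actually produces $H^3(M,\partial M;\mathbb{Z}/2)\cong\mathbb{Z}/2$. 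The relative sections form a $\mathbb{Z}/2$-torsor, on which a cusp acts by the nontrivial element. Then $q^{-1}([F])$, being a quotient of this torsor by the $\pi_1(\mathrm{Map}_\partial(M,W),F)$-action, has at most two elements. The conclusion $q^{-1}([F])\cong\mathbb{Z}$ cannot come out of this argument, and it is false as stated.

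\textbf{The same parity issue undermines the integer invariants in your fourth step and your freeness plan.} For $3$-dimensional sheets in a $6$-manifold, swapping the two sheets at a double point reverses its sign. That is exactly why $\mathbb{A}_{[F]}$ imposes $\overline{g}^{-1}=-\overline{g}$, which for $g=1$ forces $2\cdot\overline{1}=0$. So the self-intersection count at the trivial element is only a mod $2$ quantity. Likewise, the normal bundle has odd rank $3$, so its Euler class is $2$-torsion and gives no integer ``normal Euler number'' that a cusp shifts by one. The integer-valued invariant you wanted to use to show the $\pi_1(\mathrm{Map})$-action is trivial therefore does not exist.

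Your alternative argument does not work either. The intermediate maps in a loop in $\mathrm{Map}$ need not be immersions, so there is no derivative to take. The monodromy is the obstruction to lifting that loop of base maps to a loop of bundle monomorphisms, and nothing in your argument shows it vanishes. You are right that the paper's sketch silently ignores this action, so flagging it is a genuine improvement, but it remains open.

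\textbf{What your argument does prove.} Any immersion homotopic to $F$ rel boundary is regularly homotopic rel boundary either to $F$ or to $F$ with a single cusp inserted. That is all the paper needs later, namely homotopy invariance of $\mu$ once $\overline{1}=0$.
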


As this point is not central to the paper, we only sketch the proof.

\begin{proof}[Sketch of proof]
    By Smale-Hirsch theorem \cite{Hirsh1959,Smale1959}, there exists a weak homotopy equivalence given by differential:
\begin{align*}
    D:\textup{Imm}(\Sigma\times I,N\times I) & \rightarrow\textup{Mono}(T(\Sigma\times I),T(N\times I))\\
    F & \mapsto DF:T(\Sigma\times I)\rightarrow T(N\times I),
\end{align*}
where $\textup{Mono}(T(\Sigma\times I),T(N\times I))$ refers to the space of bundle monomorphisms. Thus $q^{-1}([F])$, all the immersions in one homotopy class, is weakly homotopic equivalent to $\textup{Mono}_F(T(\Sigma\times I),T(N\times I))$, the space of bundle monomorphisms whose base space mapping is homotopic to $f$. We may apply the homotopy on the bundle monomorphisms and think of this as a fiber bundle $$V_{3,6}\rightarrow\nu_F(T(\Sigma\times I),T(N\times I))\rightarrow \Sigma\times I.$$
Here $\nu_F(T(\Sigma\times I),T(N\times I))$ is the space of pairs $(x,u)$, where $x\in\Sigma\times I$ and $u:T_x(\Sigma\times I)\rightarrow T_{F(x)}(N\times I)$ is a linear monomorphism, and $V_{3,6}$ is the Stiefel manifold. And the space $\textup{Mono}_F(T(\Sigma\times I),T(N\times I))$ is homotopic equivalent to the space of sections of this fiber bundle, which we denote by $\Gamma\nu_F(T(\Sigma\times I),T(N\times I))$. So we need to compute $\pi_0\Gamma(\nu_i(TM,TN')).$\par

It is well known that $\pi_1(V_{3,6})=\pi_2(V_{3,6})=0$ and $\pi_3(V_{3,6})=\mathbb{Z}$. Finally, consider the CW decomposition $\Sigma=e^0\cup\Sigma^1\cup e^2$. If two sections agree on the boundary of $e^0\times I$, it can be shown that the obstruction to the two sections being homotopic is given by an element in $\pi_1$, see \cite{Hatcher2002}. But since $V_{3,6}$ is simply connected, there's no such obstruction, meaning that any sections agreeing on the boundary are actually homotopic. The same discussion is valid for the 1-skeleton $\Sigma^1$. And for the only 2-cell $e^2$, the obstruction is given by $\pi_3(V_{3,6})\cong\mathbb{Z}$, and it is realized by a single cusp homotopy.
\end{proof}

Now we give the following definition of the intersection number.

\begin{definition}\label{def2.5}
    Let $N$ be a compact, oriented 5-manifold. Let $\Sigma$ be a compact oriented 2-surface. Then, let $F$ be a generic immersion $\Sigma\times I\rightarrow N\times I$ with a whisker $\nu$. Define $G$ and $\mathbb{A}_{[F]}$ as in Definition \ref{def2.2}. Then define the following sum: $$\mu(F) \coloneqq\Sigma_{p\in F\pitchfork F}\epsilon(p)\overline{\alpha}(p)/\thicksim\in\mathbb{A}_{[F]}$$
    where
    \begin{enumerate}
        \item[(a)] The whisker $\nu$ is a path from the basepoint of $\Sigma\times I$ to the basepoint of $N\times I$, which also determines the inclusion $F_*:\pi_1(\Sigma\times I)\rightarrow \pi_1(N\times I)$;
        \item[(b)] $\gamma_1^p$ and $\gamma_2^p$ are arbitrary paths in $N\times I$ from the basepoint to $p$ along two different sheets;
        \item[(c)] $\epsilon(p)\in\{\pm1\}$ is $+1$ if the orientation of $M$ at p matches the one induced by the orientation of $N'$ on the two sheets (following the order); 
        \item[(d)] $\alpha(p)$ is the loop $\nu\gamma_1^p(\gamma_2^p)^{-1}\nu^{-1}\in\pi_1(N\times I)$ and $\overline{\alpha(p)}$ is the element of $G\backslash\pi_1(N\times I)/G$ given by the class; and
        \item[(e)] $\thicksim$ refers to the equivalence relations $\overline{1} = 0$ and $\overline{g}^{-1}=-\overline{g}$ in $\mathbb{Z}G\backslash\pi_1(N\times I)/G$.
    \end{enumerate}
    
\end{definition}
This definition generally follows the definition of Wall's invariant. And the equivalence relation comes from the facts that for any self-intersection point, if we change the order of two sheets, $\alpha(p)$ gives the inverse element and $\epsilon(p)$ becomes the opposite number, and that a cusp homotopy generates a single self-intersection point corresponding to the trivial element in $\pi_1(N\times I)$. We have the following proposition:
\begin{proposition}
    Let $\Sigma$, $N$, and $F$ be defined as above. The self-intersection number $\mu$ has the following properties.
    \begin{enumerate}
        \item The self-intersection number $\mu(F)$ is unchanged under homotopies in the interior of $N\times I$. 
        \item A different choice of whisker $\nu'$ for $F$ results in conjugation of $\mu(F)$ by the element $\nu'\nu^{-1}$ of $\pi_1(N\times I)$.
    \end{enumerate}
\end{proposition}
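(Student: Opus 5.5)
We first check that $\mu(F)$ is well defined for a fixed whisker, and then prove the two properties. \textbf{Well-definedness.} The paths $\gamma_1^p,\gamma_2^p$ run from the basepoint to $p$ along the two sheets. Any two choices of $\gamma_i^p$ differ by a loop that lies in the image of $\Sigma\times I$, so it represents an element of $G$ (after conjugating by $\nu$). Replacing $\gamma_1^p$ multiplies $\alpha(p)$ on the left by an element of $G$, and replacing $\gamma_2^p$ multiplies it on the right by one. Hence the double coset $\overline{\alpha}(p)\in G\backslash\pi_1(N\times I)/G$ does not depend on these choices. If we swap the order of the two sheets, $\alpha(p)$ becomes $\alpha(p)^{-1}$. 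The sign $\epsilon(p)$ is multiplied by $(-1)^{3\cdot 3}=-1$, since each sheet is $3$-dimensional in the $6$-manifold. The relation $\overline{g}^{-1}=-\overline{g}$ in $\mathbb{A}_{[F]}$ absorbs exactly this change.

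\textbf{Property (2).} Changing the whisker from $\nu$ to $\nu'$ replaces every $\alpha(p)=\nu\gamma_1^p(\gamma_2^p)^{-1}\nu^{-1}$ by $(\nu'\nu^{-1})\,\alpha(p)\,(\nu'\nu^{-1})^{-1}$. It also conjugates the subgroup $G$ by the same element, so the identification of double-coset modules is induced by conjugation. This is a direct computation.

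\textbf{Property (1).} The main step is invariance under homotopies in the interior. Put the homotopy $F_t$ in general position, so that it is a regular homotopy except at finitely many cusp moments. In the regular part, the transverse double points move continuously. They are created or cancelled in pairs only at Whitney-type (finger) moves. At such a move the two new double points $p,q$ have opposite signs. They can be joined by a Whitney disk together with arcs on the two sheets, so $\alpha(p)$ and $\alpha(q)$ agree up to the $G$-ambiguity already handled, and the two contributions cancel.

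We must also check that a double point moving continuously keeps the same class. The paths $\gamma_i^p$ can be carried along with it, so $\alpha(p)$ changes only by a homotopy, and $\epsilon(p)$ is locally constant.

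The cusp moments are the main obstacle. By the preceding lemma, the regular homotopy classes within a homotopy class form $\mathbb{Z}$ and are related by cusp homotopies. A single cusp homotopy creates or destroys one double point. Both of its sheets lie in a small ball, so its group element is trivial and $\overline{1}=0$ kills it. Here I would model the cusp locally as in Figure \ref{fig:figure6} and verify that exactly one double point arises, with trivial loop.

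Combining the three moves, $\mu$ is constant along a generic homotopy. One point still needs care: if the homotopy is only required to be in the interior while $F$ on $\partial(\Sigma\times I)$ may move through embeddings, no double points cross the boundary. So the count is unaffected there as well.
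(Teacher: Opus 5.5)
Your proposal is correct and follows the paper's route. The independence of $\overline{\alpha}(p)$ from the choice of paths and the whisker computation for (2) are exactly what the paper does, and your extra remarks (swapping sheets is absorbed by $\overline{g}^{-1}=-\overline{g}$, and changing the whisker also conjugates $G$) are consistent with it.

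For (1), the paper only checks well-definedness and otherwise appeals to the analogy with Wall's invariant. Your generic-homotopy argument supplies exactly the invariance step the paper leaves implicit: double points move continuously, cancel in pairs at finger/Whitney moves, and each cusp contributes a single double point with trivial group element, which $\overline{1}=0$ kills. That argument only uses that cusps are isolated local events, so it is unaffected by a slip in the preceding lemma: its claim $\pi_3(V_{3,6})\cong\mathbb{Z}$ should read $\mathbb{Z}/2$.
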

\begin{proof}
    The property (1) is analogous to the property of Wall's intersection number, and here we just need to check that our intersection number is well-defined. For difference choices of $\gamma_1^p$ and $\gamma_2^p$, they may result in difference loops $\alpha(p)=\nu\gamma_1^p(\gamma_2^p)^{-1}\nu^{-1}$. Let $\eta_1^p$, $\eta_2^p$ be another choice of paths in $N'$ from the basepoint to $p$, then $\gamma_1^p(\eta_1^p)^{-1}$, $\gamma_2^p(\eta_2^p)^{-1}$ are two loops in $N\times I$, thus $\eta_1^p=g_1\gamma_1^p$, $\eta_2^p=g_2\gamma_2^p$, where $g_1,g_2\in G$ and so $\nu\gamma_1^p(\gamma_2^p)^{-1}\nu^{-1}\in\pi_1(N\times I)$ and $\nu\eta_1^p(\eta_2^p)^{-1}\nu^{-1}\in\pi_1(N\times I)$ are in the same coset in $G\backslash\pi_1(N\times I)/G$, which means $\overline{\alpha(p)}$ is well-defined and $\mu(F)$ is unchanged under homotopy.\par
    The proof of (2) is straightforward, just notice that $$\nu'\gamma^p_1\gamma^p_2\nu'^{-1}=\nu'\nu^{-1}\nu\gamma^p_1\gamma^p_2\nu^{-1}(\nu'\nu^{-1})^{-1}.$$
\end{proof}

Similar to the Wall's intersection number, we have the following theorem.

\begin{theorem}\label{thm2.4}
    An immersion $F:\Sigma\times I\rightarrow N\times I$ is homotopic to an embedding relative to the boundary if and only $\mu(F)=0$
\end{theorem}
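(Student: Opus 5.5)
The plan is to follow Wall's argument for $S^n\looparrowright M^{2n}$, $n\ge 3$, in the relative setting: a generic immersion $F:\Sigma\times I\looparrowright N\times I$ of a $3$-manifold into a $6$-manifold, embedded near $\Sigma\times\{0,1\}$.

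\emph{Necessity.} Suppose $F$ is homotopic rel boundary to an embedding. Perturb the homotopy to be generic. Then it factors as a regular homotopy together with finitely many cusp homotopies. By the preceding lemma, such a factorisation exists because homotopy classes of immersions differ from regular homotopy classes only by cusps. Along a generic regular homotopy, double points are created or cancelled in pairs; each pair has opposite signs and the same group element, so these changes cancel in $\mathbb{Z}G\backslash\pi_1(N\times I)/G$. A cusp homotopy creates one double point whose group element is trivial, and that contribution is killed by the relation $\overline 1=0$. Hence $\mu$ is invariant under homotopy rel boundary, which is property (1) of the preceding proposition. Since $\mu$ of an embedding is $0$, we get $\mu(F)=0$.

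\emph{Sufficiency.} Assume $\mu(F)=0$. The double points form a finite set. The relation $\overline g^{-1}=-\overline g$ accounts for swapping the order of the sheets. Using it, I reorder the sheets at each double point so that $\mu(F)=0$ holds in the free module on unordered classes modulo $\overline 1$. Then the double points split into two kinds:
\begin{itemize}
\item pairs $p,q$ with $\epsilon(p)=-\epsilon(q)$ and $\overline{\alpha(p)}=\overline{\alpha(q)}$;
\item single points with $\overline{\alpha(p)}=\overline 1$.
\end{itemize}
Points of the second kind I remove by a cusp homotopy, introducing a double point of trivial element and the opposite sign, which pairs with $p$.

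For each pair I build a Whitney circle. Choose arcs on the two sheets, from $p$ to $q$ on one sheet and from $q$ to $p$ on the other, so that the resulting loop is null-homotopic in $N\times I$. This is possible exactly because the double cosets agree: I modify the arc on each sheet by a loop in $\Sigma\times I$, whose image lies in $G$, to absorb the left and right $G$-ambiguity. The sheet arcs must stay in the interior and be disjoint from all other double points, which is fine in dimension $3$. The loop then bounds a map of a disk into the $6$-dimensional interior. Since $2\cdot 2<6$, a generic such disk is embedded, and it misses $F$ away from its boundary because $2+3<6$.

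The framing obstruction for the Whitney disk lies in a group that vanishes here. Indeed, the normal data of the disk relative to the sheets is governed by $\pi_1$ of the relevant Stiefel manifold, and this is trivial (compare $\pi_1(V_{3,6})=0$). The orientation condition $\epsilon(p)=-\epsilon(q)$ is what makes the Whitney move possible. The move is a regular homotopy supported in the interior that removes $p$ and $q$. Iterating over all pairs yields an embedding, homotopic to $F$ rel boundary.

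The main obstacle is the relative, non-simply-connected bookkeeping. I must ensure the relation $\overline g^{-1}=-\overline g$ together with the double-coset ambiguity really lets me match signs and choose null-homotopic Whitney circles. In particular, when $\overline g=\overline g^{-1}$, a single double point may be forced to cancel against itself up to sign. In that case I would first create an auxiliary trivial-element double point by a cusp, or use a finger move, to convert it into an honest cancelling pair.
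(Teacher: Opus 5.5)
Your proposal is correct and follows the paper's route. Necessity comes from homotopy invariance of $\mu$: generic regular homotopies create or cancel double points in cancelling pairs, and cusps only add $\overline{1}$. Sufficiency comes from pairing double points with equal double cosets and opposite signs, using null-homotopic Whitney circles and embedded Whitney disks in the $6$-dimensional interior, after a cusp supplies a partner for any trivial-class point.

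The worry in your last paragraph is moot, and your proposed cusp/finger-move fix would not have helped anyway. For a self-inverse class $\overline{g}=\overline{g^{-1}}\neq\overline{1}$, the element $\overline{g}$ has order $2$ in $\mathbb{A}_{[F]}$, so $\mu(F)=0$ forces an even number of such double points. Any two of them pair up after you swap the sheet order at one, which flips its sign without changing its class.
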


\begin{proof}[Proof of Theorem \ref{thm2.4}]
    If $F$ is isotopic to an embedding, then there is no self-intersection, hence $\mu(F)=0$. On the other hand, if $\mu(F)=0$, by the definition $\mu$, every intersection point $p$ of $F$ can either be paired by another intersection point $q$ such that $\epsilon(p)\overline{\alpha(p)}+\epsilon(q)\overline{\alpha(q)}=0$, or $\overline{\alpha(p)}=0$.\par
    For the first case, $\overline{\alpha(p)}=-\overline{\alpha(q)}$. Let $\gamma_i$, for $i\in\{1,2,3,4\}$ be disjointly embedded arcs in $\Sigma\times I$ such that $\gamma_1$ goes from the base point of $F$ to $p$ along the second sheet of $F$ at $p$, $\gamma_2$ goes from $p$ to itself leaving on the second sheet of $F$ at $p$ and returning on the first, $\gamma_3$ goes from $p$ to $q$, leaving o the first sheet of $i$ at $p$ and ending on the second sheet of $F$ at $q$, and $\gamma_4$ goes from $q$ to itself, leaving on the second sheet of $F$ at $q$ and returning on the first. And then $\epsilon(p)\overline{\alpha(p)}=\epsilon(p)\overline{\nu\gamma_1\gamma_2\gamma_1^{-1}\nu^{-1}}$, and similarly, $\epsilon(q)\overline{\alpha(q)}=\epsilon(q)\overline{\nu\gamma_1\gamma_2\gamma_3\gamma_4\gamma_3^{-1}\gamma_2^{-1}\gamma_1^{-1}\nu^{-1}}$.Then by appropriate choice of $\gamma_3$ and $\gamma_4$, we could obtain $\nu\gamma_1\gamma_2\gamma_1^{-1}\nu^{-1}=\nu\gamma_1\gamma_2\gamma_3\gamma_4\gamma_3^{-1}\gamma_2^{-1}\gamma_1^{-1}\nu^{-1}$ as elements in $\pi_1(N\times I)$. In other words, $(\nu\gamma_1\gamma_2^{-1}\gamma_1^{-1}\nu^{-1})(\nu\gamma_1\gamma_2\gamma_3\gamma_4\gamma_3^{-1}\gamma_2^{-1}\gamma_1^{-1}\nu^{-1})=\nu\gamma_1\gamma_3\gamma_4\gamma_3^{-1}\gamma_2^{-1}\gamma_1^{-1}\nu^{-1}$ is trivial in $\pi_1(N\times I)$. This is a basepoint-changing conjugation away from the loop $\gamma_3\gamma_4\gamma_3^{-1}\gamma_2^{-1}$, and so the loop is null-homotopic in $N\times I$. Thus, $p$ and $q$ can be paired by a Whitney disk.\par
    For the second case, notice that $\overline{\alpha(p)}=\overline0=\overline1$, and a cusp homotopy generates a self-intersection $q$ corresponding to the trivial element in $\pi_1(N\times I)$. And so $p$ and $q$ can be regarded as two intersection points in case (a), which can be paired by a Whitney disk.\par
    Since the manifold is 3-dimensional and codimension is also 3, by the Whitney move, all the intersections can be eliminated in pairs \cite{Milnor1965}, and so $F$ is homotopic to an immersion with no intersection points, which is an embedding.
\end{proof}

Since $\mu$ is a homotopy invariant, we may now define $\mu$ on the homotopy classes $\mathscr{H}^f$, which induces a map $\mu:\mathscr{H}^f\rightarrow\mathbb{A}_{[f]}$. Now, let us prove the main theorem of this section.

\begin{proof}[Proof of theorem \ref{thm2.3}]
    The necessity is straightforward. An isotopy has no self-intersection points and thus the intersection number vanishes.\par
    For sufficiency, if $\mu([H])=0$, by a small perturbation we may assume that $H$ is an immersion such that $\mu(H)=0$, then by Theorem \ref{thm2.4}, $H$ is homotopic to an embedding $G:\Sigma\times I\rightarrow N\times I$ relative to the boundary. In the general case, the embedding may not be an isotopy, for it may not preserve the "$I$" coordinate, which is called a "concordance'. However, we have the following theorem.
    \begin{theorem}[\cite{Hudson1970}]
        If $M$ and $Q$ are compact smooth manifolds, let $F:M\times I\rightarrow Q\times I$ be an allowable smooth concordance. If $\textup{dim}M\leq\textup{Q}-3$, then there is an ambient diffeotopy $J$ of $Q\times I$, fixed on $Q\times 0$ such that $J_1F=F_0\times id$.
    \end{theorem}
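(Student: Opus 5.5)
Hudson's theorem is a classical result imported from \cite{Hudson1970}; here I only sketch how I would prove it myself. The idea is to reduce, handle by handle, to the case of a disk. In that case the codimension $\ge 3$ hypothesis lets general position turn a concordance into a level-preserving embedding. Throughout, ``allowable'' means that $F^{-1}(Q\times\{i\})=M\times\{i\}$ for $i=0,1$, and that $F$ is a product near $M\times\partial I$.

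\textbf{Step 1 (reduction to disks).} Choose a handle decomposition of $M$ and induct on the handles. Suppose $F$ is already level-preserving on a neighbourhood of $M_{j-1}\times I$, where $M_{j-1}$ is the union of the first $j-1$ handles. The next handle $h=D^k\times D^{m-k}$ gives a concordance of its core $D^k$, rel $\partial D^k$, in the complement of the already straightened part. By uniqueness of tubular neighbourhoods (thickening), it suffices to straighten the core $D^k\times I$ by an ambient diffeotopy of $Q\times I$. This diffeotopy should be fixed on $Q\times 0$ and on the image of $M_{j-1}\times I$. The relevant ambient manifold is $W=Q\times I$ minus a neighbourhood of that image. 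In codimension $\ge 3$, removing a submanifold does not change $\pi_0$ or $\pi_1$, and this is where $\dim M\le \dim Q-3$ is first used.

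\textbf{Step 2 (straightening a disk concordance).} Let $g:D^k\times I\to W$ be a concordance rel boundary. Consider the height function $t\circ g$ on $D^k\times I$. After a small perturbation it is Morse in the interior. I want to remove its critical points one at a time, or cancel them in pairs, by ambient isotopies of $W$ that fix $W\cap (Q\times 0)$. The device I would try first is Hudson's ``sunny collapse''. One builds a vector field on $W$ whose $\partial_t$-component is negative, and which is tangent to $g(D^k\times I)$ along a collar. The goal is that every downward flow line meets $g(D^k\times I)$ at most once. The set of points where a downward flow line meets $g(D^k\times I)$ twice has dimension about $2(k+1)-(q+1)+1$. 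When $k\le q-3$ this is less than $k+1$, so general position makes the ``shadows'' of the disk under the downward flow disjoint from the disk itself. Flowing down then gives an isotopy that pushes $g$ onto a level-preserving embedding, namely $g_0\times\mathrm{id}$.

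\textbf{Step 3 (assembling).} Compose the ambient diffeotopies from the inductive steps. Each is fixed on $Q\times 0$, so their composite $J$ satisfies $J_1F=F_0\times\mathrm{id}$. The smooth version needs some care at corners and with collars near $M\times\partial I$, but this is routine.

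The main obstacle is Step 2. The dimension count must show that general position removes the self-shadowing of the trace of the disk under the downward flow. It must also show that the correcting isotopies can be chosen fixed on the bottom level and on the previously straightened handles; this is exactly where the codimension-three hypothesis is indispensable.
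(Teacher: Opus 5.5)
The paper gives no proof of this statement; it is quoted from Hudson. So your sketch has to stand on its own, and its decisive step fails.

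\textbf{The general-position claim in Step 2 is false in codimension three.} Your dimension count is right, but the conclusion is not. Call a vertical chord of the trace $P=g(D^k\times I)$ a pair of points of $P$ with the same image under $Q\times I\to Q$. In general position these form a set of dimension $2(k+1)-q$, and general position makes that set empty only when this number is negative, i.e.\ $q\ge 2k+3$. That it is smaller than $\dim P=k+1$ gives nothing: two $(k+1)$-dimensional sets can be pulled apart in the $q$-dimensional $Q$ only in that stable range.

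In the case the paper needs ($\dim M=2$, $\dim Q=5$), your disjointness claim holds for the cores of the $0$- and $1$-handles. For the $2$-handle, however, the trace is $3$-dimensional. Its vertical chords generically form a $1$-dimensional set, and there are isolated vertical tangencies; a small perturbation removes neither. So at best you have sketched the easy stable-range case, not the codimension-three theorem.

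\textbf{The flow mechanism is also not coherent as written.} First, a vector field with everywhere negative $\partial_t$-component does not generate a diffeotopy of $Q\times I$ fixed on $Q\times 0$. Second, pushing the trace downward cannot produce $g_0\times\mathrm{id}$, whose image must still meet $Q\times 1$. Third, near $D^k\times\{0,1\}$ every allowable concordance, including the product you are aiming for, is a product. A flow line that is vertical there, or tangent to $P$ along the collar, therefore meets $P$ in a whole segment. Hence ``each downward flow line meets $P$ at most once'' is never satisfied.

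\textbf{The missing idea is an argument that tolerates a nonempty shadow.} In the sunny-collapsing approach you allude to, the shadow is not removed. The codimension hypothesis is used only to make the shadowed set small: it has codimension at least two in the trace, since $2(m+1)-q\le (m+1)-2$ exactly when $q-m\ge 3$. A separate lemma then arranges, after a small isotopy, a collapse of the trace onto its bottom in which nothing that remains ever lies above the part being removed. It is that collapse, not a disjointness statement, that is turned into the ambient isotopy.

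Finally, the remark in Step 1 about $\pi_0$ and $\pi_1$ of the complement plays no role. Once the earlier handles are a product, the complement of a product tube is again of the form $Q'\times I$. The codimension hypothesis enters only in the disk step, which is exactly the step left unproved.
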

    Since $G:\Sigma\times I \rightarrow N\times I$ has codimension 3, we can apply this theorem to learn that $G$ is homotopic to an isotopy, which means $H$ is homotopic to an isotopy relative to the boundary.
\end{proof}

The following Figure \ref{fig:figure1} shows how an element of the quotient fundamental group acts on the homotopic surface. For $g\in\pi_1N$ and $i:\Sigma\hookrightarrow N$, we define $g\cdot i:\Sigma\hookrightarrow N$ as follows. Firstly, note that the normal bundle of $i$ is $3-$dimensional, so its meridian $m$ is a $2-$sphere; we choose it over a point near the basepoint $p\coloneq i(e)$ and orient it according to the orientation of $\Sigma$ and $N$. We then define $g\cdot i$ as an ambient connected sum of $i$ and $m$ along a tube following an arc representing $g$, where the arc starts and ends near $p$ and has interior disjoint from $i$.

\begin{figure}
    \begin{overpic}[scale=0.7]{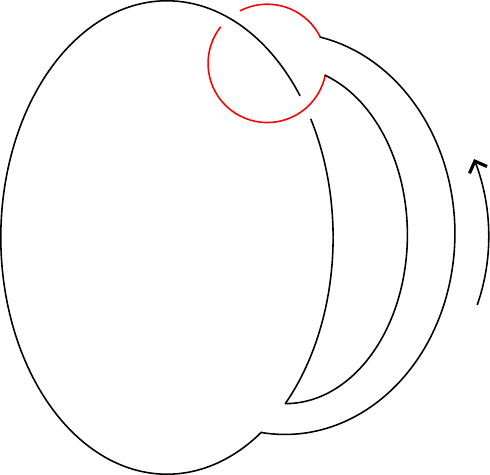}
		\put(40,67){\textcolor[rgb]{1,0,0}{$m$}}
		\put(30,15){\large{$g\cdot i$}}
        \put(105,50){$g$}
	\end{overpic}
    \caption{The action of $g$ on $i$}
    \label{fig:figure1}
\end{figure}

Similarly, linear combinations $\Sigma_in_ig_i$ act by multiple connected sums along $g_i$ into copies of $m$ for $n_i$ times, and if $n<0$ just reverse the orientation. The relation $g=g^{-1}$ and $0=1$ is easy to check, see Figure \ref{fig:figure2}.

\begin{figure}
    \begin{overpic}[scale=0.7]{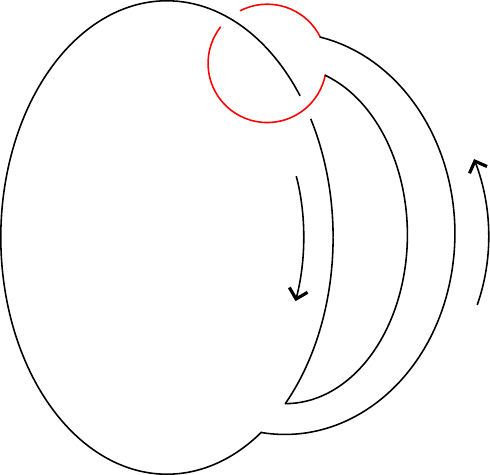}
		\put(40,67){\textcolor[rgb]{1,0,0}{$m$}}
		\put(30,15){\large{$g\cdot i$}}
        \put(105,50){$g=1\in\pi_1N$}
        \put(40,30){isotopy}
	\end{overpic}
    \caption{The relation $1=0$ realized by isotopies}
    \label{fig:figure2}
\end{figure}

If the group $G=i_*(\pi_1\Sigma)$ is not trivial, the action of an element $g\in G\subset\pi_1N$ would also be trivial. Geometrically, this can be seen by reversing the action along a representation of $g$ in $i$, see Figure \ref{fig:figure3}.

\begin{figure}
    \begin{overpic}[scale=0.5]{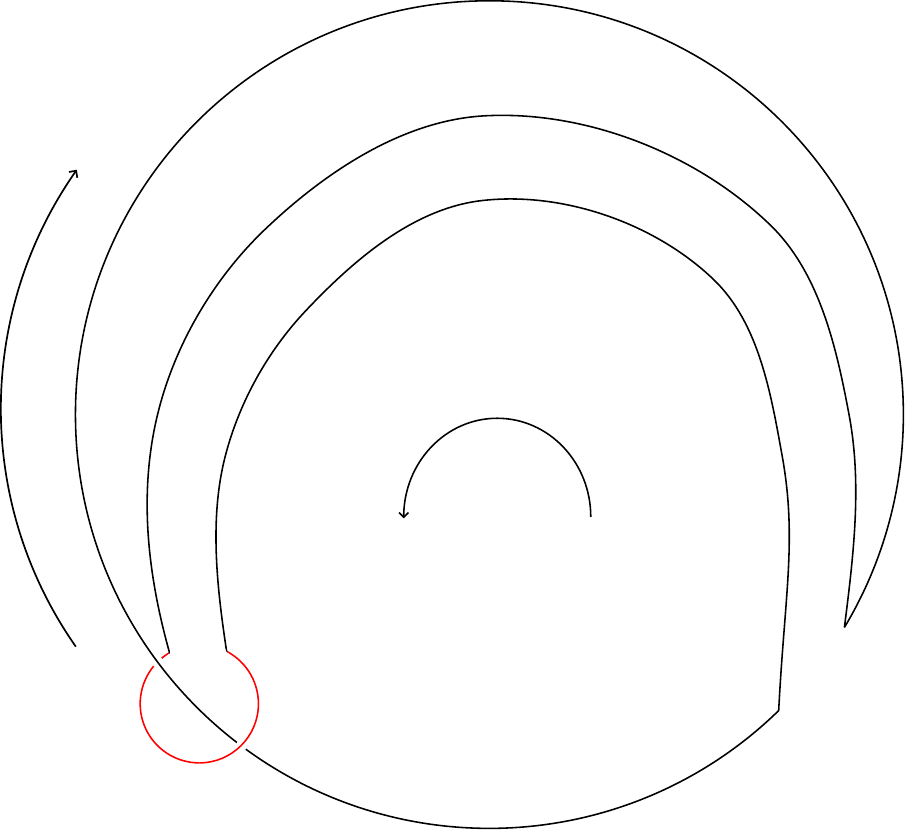}
		\put(30,15){\textcolor[rgb]{1,0,0}{$m$}}
		\put(60,15){\large{$g\cdot i$}} 
        \put(-17,50){$g\in G$}
        \put(47,50){$g\in G$}
	\end{overpic}
    \caption{The relation $0=g\in G$ realized by isotopies}
    \label{fig:figure3}
\end{figure}

\section{Affine Action of Self-homotopies}

From the last section, we learn that if there are two homotopic embeddings $f$ and $f'$, and a homotopy $H$ from $f$ to $f'$ such that $\mu([H])=0$, then $H$ is homotopic to an isotopy relative to the boundary, which means that $f$ is isotopic to $f'$. However, the homotopy between them is not unique. If $G$ is a self-homotopy of $f$, in other words, $G$ is a homotopy from $f$ to $f$ itself, then the concatenation of $G$ and $H$ gives another homotopy from $f$ to $f'$. So, even if $\mu([H])\neq 0$, we cannot claim that $f$ is not isotopic to $f'$. This inspires us to study the influence of self-homotopies on our invariant.\par
Let's start with some definitions.
\begin{definition}\label{def3.1}
    Let $\mathscr{H}_0^f$ be the subspace of $\mathscr{H}^f$ such that for every $[H]\in\mathscr{H}_0^f$, $H_0=H_1=f$. We call $\mathscr{H}_0^f$ the set of self-homotopies of $f$.
\end{definition}

A basic fact is that under concatenation, $\mathscr{H}^f_0$ forms a group.

\begin{proposition}\label{prop3.2}
    For $[H],[H']\in\mathscr{H}_0^f$, define $[H]\cdot[H']=[H\cdot H']$, where $$H\cdot H'(x,t)=\left\{
    \begin{aligned}
    & H(x,2t),0\leq t\leq 1/2,\\
    & H'(x,2t-1),1/2<t\leq 1
    \end{aligned}
    \right.
    $$
    Then $\mathscr{H}_0^f$ forms a group under the product.
\end{proposition}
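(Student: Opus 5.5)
We verify the group axioms for concatenation on $\mathscr{H}_0^f$, in the order: well-definedness on classes, associativity, identity, inverses. Throughout, the equivalence of Definition \ref{def2.1} is homotopy of homotopies with the bottom end fixed at $f$ and the top end varying only by isotopy. For self-homotopies we will keep the top end at $f$ as well, so that every $J$ we build restricts to $f$ on $\Sigma\times\{0\}\times I$ and on $\Sigma\times\{1\}\times I$.

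\textbf{Well-definedness.} Let $J$ realize $H\sim\tilde H$ and $J'$ realize $H'\sim\tilde H'$, both with both ends fixed at $f$. Then the concatenation in the $t$-variable,
\[
K(x,t,s)=\begin{cases} J(x,2t,s), & 0\le t\le 1/2,\\ J'(x,2t-1,s), & 1/2<t\le 1,\end{cases}
\]
is continuous, because $J(x,1,s)=f(x)=J'(x,0,s)$. It realizes $H\cdot H'\sim\tilde H\cdot\tilde H'$.

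The subtle point is that Definition \ref{def2.1} only requires the top ends to be isotopic. If $J$ ends at an isotopy from $f$ to itself, I will absorb that isotopy into the homotopy. An isotopy is in particular a homotopy, so we can reparametrize to push the isotopy of the top end into the $t$-direction; then $\tilde H$ is equivalent to a homotopy with top end exactly $f$. With this convention the concatenation formula above applies directly.

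\textbf{Associativity and identity.} These follow from the standard reparametrization homotopies, exactly as for the fundamental group. $(H\cdot H')\cdot H''$ and $H\cdot(H'\cdot H'')$ differ by precomposition with a piecewise-linear map $\phi:I\to I$ fixing $0$ and $1$. The straight-line homotopy $\phi_s=(1-s)\phi+s\,\mathrm{id}$ gives $J(x,t,s)=\bigl((H\cdot H')\cdot H''\bigr)(x,\phi_s(t))$, which fixes both ends. The identity is the constant homotopy $c_f(x,t)=f(x)$, and $c_f\cdot H\sim H$ by the same reparametrization trick.

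\textbf{Inverses.} Take $\bar H(x,t)=H(x,1-t)$. Then $H\cdot\bar H\sim c_f$ via
\[
J(x,t,s)=\begin{cases} H(x,2t(1-s)), & t\le 1/2,\\ H(x,(2-2t)(1-s)), & t\ge 1/2,\end{cases}
\]
which equals $f$ on $t=0$ and on $t=1$ for every $s$, and equals $f$ at $s=1$.

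\textbf{Main obstacle.} The only nonroutine point is the one flagged under well-definedness: the equivalence relation allows the top end to move by isotopy rather than being fixed, and concatenation must be compatible with that freedom. I expect to handle it as described, by inserting the isotopy as an extra segment of the homotopy. Once that is done, $\mathscr{H}_0^f$ is effectively $\pi_1(\mathrm{Map}(\Sigma,N),f)$ with the usual loop operations.
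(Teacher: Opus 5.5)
Your unit and inverse homotopies are the paper's reparametrization arguments. The paper checks only $E\cdot H\sim H$ and $H\cdot(-H)\sim E$, and says nothing about well-definedness or associativity. Your associativity and well-definedness arguments are correct whenever the homotopy $J$ for the left factor fixes both ends at $f$.

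The gap is the step you flag as the main obstacle. Suppose $J$ realizes $H\sim\tilde H$ and its top edge $s\mapsto J(\cdot,1,s)$ is a nontrivial loop $\gamma$ of embeddings at $f$. Pushing that edge into the $t$-direction only shows that $\tilde H$ is homotopic rel $\Sigma\times\{0,1\}$ to $H\cdot\gamma$, not to $H$. Since $\tilde H$ already ends at $f$, ``making the top end exactly $f$'' changes nothing. Your concatenated homotopy $K$ is then discontinuous at $t=1/2$: there $J(x,1,s)=\gamma_s(x)$ while $J'(x,0,s)=f(x)$.

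Varying the right factor is harmless, because the top edge of $J'$ becomes the top edge of the concatenation, and that edge may be an isotopy. For the left factor, however, you still need $H\cdot\gamma\cdot H'\sim H\cdot H'$. That is, you need a loop $\gamma'$ of embeddings with $\gamma\cdot H'\simeq H'\cdot\gamma'$ rel ends.

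Put differently, let $\Lambda$ be the image of $\pi_1(\textup{Emb}(\Sigma,N),f)\to\pi_1(\textup{Map}(\Sigma,N),f)$. Under your reading of Definition~\ref{def2.1}, $\mathscr{H}^f_0$ is the coset set $\pi_1(\textup{Map}(\Sigma,N),f)/\Lambda$. Concatenation descends to it if and only if $\Lambda$ is normal. Your closing identification of $\mathscr{H}^f_0$ with $\pi_1(\textup{Map}(\Sigma,N),f)$ is right only when $\Lambda$ is trivial.

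Normality is not a formality. By Theorem~\ref{thm2.3} and the additivity $\mu(A\cdot B)=\mu(A)+s_A\,\mu(B)\,s_A^{-1}$ used later in the paper ($s_A$ the basepoint trace of $A$), $\Lambda$ is the kernel of a crossed homomorphism. Such kernels need not be normal; for a principal one $c\mapsto c\cdot a-a$ the kernel is the stabilizer $\{c : c\cdot a=a\}$.

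There are two ways to repair this:
\begin{enumerate}
\item Define equivalence of \emph{self}-homotopies as homotopy rel $\Sigma\times\{0,1\}$, so that $\mathscr{H}^f_0=\pi_1(\textup{Map}(\Sigma,N),f)$. This is the setting the paper's proof tacitly uses, and there your argument, minus the absorption paragraph, is complete.
\item Keep the looser relation and actually prove $\Lambda\trianglelefteq\pi_1(\textup{Map}(\Sigma,N),f)$. This is a genuine geometric statement about loops of embeddings, not a reparametrization.
\end{enumerate}
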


\begin{proof}
    Since $(H\cdot H')_0=H_0=f$ and $(H\cdot H')_1=H'_1=f$, the product is closed. Let $E(x,t)=f(x), \forall t\in I$, then for every homotopy class $[H]\in\mathscr{H}^f_0$ consider the homotopy $J:\Sigma\times I\times I\rightarrow \Sigma\times I$, $$J(x,t,s)=\left\{
    \begin{aligned}
    & E(x,2t),0\leq t\leq \frac{1-s}{2},\\
    & H(x,\frac{2}{1+s}t-\frac{1-s}{1+s}),\frac{1-s}{2}<t\leq 1
    \end{aligned}
    \right.
    $$
    $J$ gives a homotopy between $E\cdot H$ and $H$. Thus $[E]\cdot [H]=[H]$ and similarly $[H]\cdot[E]=[H]$, so $[E]$ is the unit element. Moreover, let $H$ be a self-homotopy of $f$, define $-H$ as $-H(x,t)=H(x,1-t)$, then it's also similar to see $H\cdot(-H)\sim(-H)\cdot H\sim E$. So $\mathscr{H}^f_0$ forms a group.
\end{proof}

To give the action of $\mathscr{H}^f_0$ on $\mathscr{H}^f$ and $\mathbb{A}_{[f]}$, we need to consider the CW decomposition of $\Sigma$, in other words, let $\Sigma=e^0\cup (\cup_\alpha e^1_\alpha)\cup e^2$. Denote by $\Sigma^1$ the 1-skeleton $e^0\cup (\cup_\alpha e^1_\alpha)$. Consider the self-homotopies on the skeletons of $\Sigma$.

\begin{definition}
    Suppose $A$ is a closed submanifold of $\Sigma$. Let $\mathscr{H}^f_0(A,N)$ denote the space of homotopy classes of self-homotopies $H:A\times I\rightarrow N\times I$ relative to homotopy fixing the boundary, such that $H_0=H_1=f|_{A}$.
\end{definition}

By definition, we have $\mathscr{H}^f_0=\mathscr{H}^f_0(\Sigma,N)$.\par

Consider the map
$$\mathscr{H}^f_0(\Sigma,N)\xrightarrow{r_2}\mathscr{H}_0^f(\Sigma^1,N)\xrightarrow{r_1}\mathscr{H}^f_0(e_0,N)\xrightarrow{r_0}*,$$

where the maps $r_1$, $r_2$ are given by restrictions. Just as Proposition \ref{prop3.2}, it is straightforward to prove that $\mathscr{H}^f_0(\Sigma^1,N)$ and $\mathscr{H}^f_0(e^0,N)$ are groups under concatenation, and restrictions $r_2,r_1$ are group homomorphisms. For simplicity, we will denote by $H$ a self-homotopy of $\Sigma$, $H^1$ a self-homotopy of $\Sigma^1$, and $H^*$ a self-homotopy of the basepoint.

\begin{definition}
    Let $\mathscr{G}_2$ be $\ker(r_2)$, $\mathscr{G}_1$ be $\ker(r_1)\cup\textup{im}(r_2)$ and $\mathscr{G}_0$ be $\textup{im}(r_1\circ r_2)$. 
\end{definition}

In other words, $\mathscr{G}_2$ consists of all the homotopies that fix the 1-skeleton. $\mathscr{G}_1$ consists of all the homotopies of 1-skeletons that fix the basepoint and can be extended to self-homotopies of $\Sigma$. And $\mathscr{G}_0$ consists of all the homotopies of the basepoint that can be extended to self-homotopies of $\Sigma$. Since $r_2,r_1$ are group homomorphisms, we have

\begin{proposition}\label{prop3.7}
    $\mathscr{G}_2$ is a normal subgroup of $\mathscr{H}^f_0(\Sigma,N)$, and $\mathscr{G}_1$ is a normal subgroup of $\textup{im}(r_2)$.
\end{proposition}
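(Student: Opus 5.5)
The plan is to reduce both claims to the fact that the kernel of a group homomorphism is a normal subgroup. First I will check carefully that the restriction maps $r_2:\mathscr{H}^f_0(\Sigma,N)\to\mathscr{H}^f_0(\Sigma^1,N)$ and $r_1:\mathscr{H}^f_0(\Sigma^1,N)\to\mathscr{H}^f_0(e^0,N)$ are well-defined group homomorphisms.

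\emph{Well-definedness.} If $J:\Sigma\times I\times I\to N\times I$ is a homotopy between self-homotopies $H$ and $H'$ of $f$, with $J|_{\Sigma\times\{0\}\times I}=f$, then $J|_{\Sigma^1\times I\times I}$ is a homotopy between $H|_{\Sigma^1\times I}$ and $H'|_{\Sigma^1\times I}$. Its restriction to $\Sigma^1\times\{0\}\times I$ is $f|_{\Sigma^1}$. So the class of the restriction does not depend on the representative, and the same argument works for $e^0\subset\Sigma^1$.

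\emph{Homomorphism.} The product of Proposition \ref{prop3.2} is defined pointwise in $x$ by reparametrizing the $t$-coordinate. Hence
$$(H\cdot H')|_{A\times I}=H|_{A\times I}\cdot H'|_{A\times I}$$
for any subcomplex $A$. The constant homotopy $E$ restricts to the constant homotopy, which is the identity of the group of self-homotopies of $f|_A$. This is shown exactly as in Proposition \ref{prop3.2}.

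\emph{Normality of $\mathscr{G}_2$.} Since $\mathscr{G}_2=\ker(r_2)$, the first claim is immediate. For $[G]\in\mathscr{G}_2$ and $[H]\in\mathscr{H}^f_0$ we have
$$r_2([H][G][H]^{-1})=r_2([H])\,r_2([H])^{-1}=[E|_{\Sigma^1}].$$

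\emph{Normality of $\mathscr{G}_1$.} Here I will read $\mathscr{G}_1$ as $\ker(r_1)\cap\textup{im}(r_2)$. This is what the verbal description says: homotopies of the $1$-skeleton that fix the basepoint \emph{and} extend to self-homotopies of $\Sigma$. The literal union $\ker(r_1)\cup\textup{im}(r_2)$ is in general not contained in $\textup{im}(r_2)$, so the statement would not make sense for it.

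With this reading, $\textup{im}(r_2)$ is a subgroup of $\mathscr{H}^f_0(\Sigma^1,N)$, being the image of a homomorphism. The restriction $r_1|_{\textup{im}(r_2)}:\textup{im}(r_2)\to\mathscr{H}^f_0(e^0,N)$ is again a homomorphism, and its kernel is exactly $\ker(r_1)\cap\textup{im}(r_2)=\mathscr{G}_1$. Therefore $\mathscr{G}_1$ is normal in $\textup{im}(r_2)$.

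The only step needing real care is well-definedness. The equivalence relation on self-homotopies fixes the time-$0$ end; for self-homotopies the time-$1$ end is also $f$. So I must make sure that restricting the homotopy $J$ keeps both ends fixed, in particular the condition that $J$ equals $f|_A$ on $A\times\{0\}\times I$. This is immediate from the definitions once written out.
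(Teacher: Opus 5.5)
Your proposal is correct and follows the paper's argument. The paper derives the proposition in one line from the fact that $r_2$ and $r_1$ are group homomorphisms, so that $\mathscr{G}_2=\ker(r_2)$ and $\mathscr{G}_1=\ker\bigl(r_1|_{\textup{im}(r_2)}\bigr)$ are normal; you just supply the well-definedness and homomorphism checks it leaves implicit. Your reading $\mathscr{G}_1=\ker(r_1)\cap\textup{im}(r_2)$ is the intended one, as the paper's later proof of $\mathscr{G}_1\cong\ker(\iota)$ uses $\mathscr{G}_1\subset\ker(r_1)$. Your well-definedness check is right when the equivalence on self-homotopies is taken relative to both ends $t=0,1$, as in the definition of $\mathscr{H}^f_0(A,N)$; your final paragraph conflates $H_1=f$ with $J$ being fixed at $t=1$, but that is the intended reading.
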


Hence, there is a group isomorphism $(\mathscr{H}^f_0/\mathscr{G}_2)/\mathscr{G}_1\cong\mathscr{G}_0$. To give the commutative diagram in Theorem $\ref{thm1.1}$, we consider the following decomposed commutative diagram:

\begin{center}
\begin{tikzcd}
& \mathscr{H}^f \arrow[r,"q_2"] \arrow[d,"\mu"]
& \mathscr{H}^f/\mathscr{G}_2 \arrow[r,"q_1"] \arrow[d,"\mu^1"]
& (\mathscr{H}^f/\mathscr{G}_2)/\mathscr{G}_1 \arrow[r,"q_0"] \arrow[d,"\mu^*"]
& ((\mathscr{H}^f/\mathscr{G}^f_2)/\mathscr{G}_1)/\mathscr{G}_0 \arrow[d,"\overline\mu"]\\
& \mathbb{A}_{[f]} \arrow[r]
& \mathbb{A}_{[f]}/\mathscr{G}_2\eqqcolon\mathbb{A}_f^1 \arrow[r]
& \mathbb{A}_f^1/\mathscr{G}_1\eqqcolon\mathbb{A}_f^* \arrow[r]
& \mathbb{A}_f^*/\mathscr{G}_0\eqqcolon\mathbb{A}_{f}
\end{tikzcd}
\end{center}

where $\mathscr{G}_2,\mathscr{G}_1$ and $\mathscr{G}_0$ act on the corresponding sets respectively, $q_2,q_1$ and $q_0$ are projective maps, and $\mu^1,\mu^*$, and $\overline\mu$ are induced map of $\mu$ on the orbit spaces. We will show that $((\mathscr{H}^f/\mathscr{G}^f_2)/\mathscr{G}_1)/\mathscr{G}_0\cong\mathscr{H}^f/\mathscr{H}^f_0$. In section 4, we will further prove that the induced map $\overline\mu$ is a bijection.

In the following discussion, we will discuss the problem of extending a homotopy several times. Thus, for convenience, we state the homotopy extension theorem here.
\begin{lemma}[Homotopy Extension Theorem, see \cite{Hatcher2002}]\label{lemma3.8}
    Any CW pair $(X,A)$ has the homotopy extension property, namely, for any map $F:X\rightarrow Y$ and a homotopy $h:A\times I\rightarrow Y$ of $A$ satisfying $h_0=F|_{A}$, there exists a homotopy $H:X\times I\rightarrow Y$ of $X$ such that $H|_{A\times I}=h$.
\end{lemma}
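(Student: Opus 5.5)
The plan is to use the standard cellular-induction argument from Hatcher. First I reduce to the case where $A$ is a subcomplex of $X$ and $X$ is obtained from $A$ by attaching cells. The goal is a retraction
$$r: X\times I \to X\times\{0\}\cup A\times I.$$
Given such an $r$, the extension is forced: $F$ on $X\times\{0\}$ and $h$ on $A\times I$ agree on $A\times\{0\}$ because $h_0=F|_A$, so they glue to a continuous map $F\cup h$ on $X\times\{0\}\cup A\times I$. Then $H\coloneqq (F\cup h)\circ r$ satisfies $H|_{A\times I}=h$ and $H_0=F$.

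To build $r$, I first treat a single cell. The pair $(D^n\times I,\ D^n\times\{0\}\cup \partial D^n\times I)$ admits a retraction by radial projection from the point $(0,2)\in D^n\times\mathbb{R}$. Every ray from this point that meets $D^n\times I$ exits through the bottom or the side, so projection along it gives a continuous retraction.

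Next I build $r$ skeleton by skeleton. Let $X^n$ be the union of $A$ with the cells of $X$ of dimension at most $n$. Assuming a retraction of $X^{n}\times I$ onto $X\times\{0\}\cup X^{n-1}\times I\cup A\times I$ (restricted appropriately), I obtain the next stage by applying the disk retraction on each $n$-cell through its characteristic map $\Phi:D^n\to X$. This is compatible because the disk retraction fixes $\partial D^n\times I$ and $D^n\times\{0\}$, so it descends to the cell $\Phi(D^n)\times I$. Since the cells of a given dimension have disjoint interiors, these retractions on the individual $n$-cells are defined on disjoint pieces and assemble without conflict.

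Composing the successive deformations gives $r$ on all of $X\times I$. When $X$ is infinite-dimensional, I perform the $n$-th stage during the time interval $[1/2^{n+1},1/2^n]$, and continuity at time $0$ then holds because each point lies in a finite skeleton. Continuity of $r$ uses that $X\times I$ carries the weak topology with respect to the cells $e\times I$, which is true since $I$ is compact and locally compact. In our applications $X=\Sigma$ or $\Sigma^1$ is a finite complex, so only finitely many stages occur and the limiting step is vacuous.

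The main obstacle I expect is this topological point, namely that the product $X\times I$ is again a CW complex with the expected weak topology, so that $r$ defined cellwise is continuous. I would cite local compactness of $I$ for this.
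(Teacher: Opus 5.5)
Your argument is correct and is essentially the standard proof the paper defers to in Hatcher (Proposition 0.16): retract $D^n\times I$ onto $D^n\times\{0\}\cup\partial D^n\times I$ by radial projection, apply this cellwise on the cells of $X$ not in $A$, concatenate the stages over the time intervals $[1/2^{n+1},1/2^n]$, and get continuity from the weak topology on $X\times I$ (using local compactness of $I$). One detail should be stated: each stage has to be a deformation retraction for the time-interval concatenation to make sense, which you get from the straight-line homotopy $t r+(1-t)\mathrm{id}$ in the convex set $D^n\times I$; for the finite complexes $\Sigma$ and $\Sigma^1$ used in the paper, composing finitely many retractions already suffices.
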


\subsection{Action of Self-homotopies on $\mathscr{H}^f$}

Firstly, let's consider the action of $\mathscr{H}^f_0$ on the homotopies.
Notice that we may extend the product to any homotopy $H$ and $G$ as long as $H_1=G_0$, in other words, if $H$ and $G$ are two homotopies, such that $H_1=G_0$, then define $$H\cdot G(x,t)=\left\{
    \begin{aligned}
    & H(x,2t),0\leq t\leq 1/2,\\
    & G(x,2t-1),1/2<t\leq 1
    \end{aligned}
    \right.
    $$
Then, if we let $[H]\in\mathscr{H}^f_0$ and $[G]\in\mathscr{H}^f$, we obtain a group action.

\begin{proposition}\label{prop3.6}
     The left group action $\Phi:\mathscr{H}_0^f\times \mathscr{H}^f\rightarrow \mathscr{H}^f$, defined by $\Phi([H],[G])=[H\cdot G]$, is well-defined and faithful.
\end{proposition}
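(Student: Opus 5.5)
We need to check three things in turn: that $\Phi$ is well defined on equivalence classes, that it satisfies the group action axioms, and that it is faithful. Throughout, we work with the equivalence relation of Definition \ref{def2.1}: homotopies agree at time $0$, have isotopic endpoints at time $1$, and are related by a homotopy $J$ that is constant equal to $f$ on $\Sigma\times\{0\}\times I$.

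\textbf{Well-definedness.} Suppose $[H]=[H']\in\mathscr{H}^f_0$ via $J$ and $[G]=[G']\in\mathscr{H}^f$ via $K$. Since $H_1=f=G_0$, the concatenation $H\cdot G$ is defined. We would like to glue $J$ and $K$ along $t=1/2$ to get $J\cdot K$. For this to be continuous, the time-one track $J|_{\Sigma\times\{1\}\times I}$ must agree with $K|_{\Sigma\times\{0\}\times I}\equiv f$.

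A priori $J$ only connects $H_1=f$ to $H'_1=f$ up to isotopy, so its time-one track is some loop of maps based at $f$. This is the main obstacle. The plan is to handle it by modifying $K$ rather than $J$:
\begin{enumerate}
\item[(i)] Let $\lambda=J|_{\Sigma\times\{1\}\times I}$, a loop of maps at $f$.
\item[(ii)] Precompose $G$ and $G'$ with the reparametrized loop $\lambda$. Using Lemma \ref{lemma3.8} on the CW pair $(\Sigma\times I\times I,\ \Sigma\times\partial(I\times I))$, extend the result to a square whose left edge is $\lambda$ and whose remaining boundary is made of $K$ and constant tracks.
\item[(iii)] Glue this square to $J$ along $\lambda$. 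The glued map has time-zero edge constant at $f$, so it gives $H\cdot G\sim H'\cdot G'$. The endpoint condition holds because $G_1$ is isotopic to $G'_1$.
\end{enumerate}
If we instead read the definition as having $J$ constant on both ends, as is implicit in the proof of Proposition \ref{prop3.2}, the gluing in (iii) is immediate.

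\textbf{Action axioms.} The identity axiom $[E]\cdot[G]=[G]$ follows from the same reparametrization homotopy written down in the proof of Proposition \ref{prop3.2}. Compatibility, $\Phi([H][H'],[G])=\Phi([H],\Phi([H'],[G]))$, is the standard associativity of concatenation up to reparametrization. The reparametrization $s\mapsto$ (piecewise linear rescaling of $t$) is a homotopy that is constant at $t=0$ and fixes the endpoint, so it is an allowed equivalence.

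\textbf{Faithfulness.} Suppose $[H]\cdot[G]=[G]$ for all $[G]\in\mathscr{H}^f$. Apply this to $[G]=[E]$, the constant homotopy. Then $[H]=[H]\cdot[E]=[E]$, using the right identity law from Proposition \ref{prop3.2}, whose proof applies verbatim since $E$ is a self-homotopy. So only the identity acts trivially. In fact the same argument gives freeness: if $[H]\cdot[G]=[G]$ for a single $[G]$, concatenate on the right with the reverse homotopy $-G$ and use $G\cdot(-G)\sim E$, again by the argument of Proposition \ref{prop3.2}, to conclude $[H]=[E]$. We would record this stronger statement, since the later identification $p^{-1}([f])\cong\mathscr{H}^f/\mathscr{H}^f_0$ will use it.
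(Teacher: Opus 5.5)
The faithfulness part of your proposal matches the paper. The paper's proof of ``faithful'' is exactly your freeness argument: right-concatenate $H\cdot G\sim G$ with $-G$. Your remark that faithfulness in the strict sense already follows by evaluating at $[E]$ is correct.

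The paper never addresses independence of representatives; the problem is in your attempt to supply it. Step (ii) fails.

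\begin{itemize}
\item Lemma \ref{lemma3.8} only extends homotopies of the boundary data, so it cannot change the homotopy class of the boundary loop of the square.
\item Take left edge $\lambda$, bottom $G$, top $G'$, and right edge the end track $k$ of $K$; padding with constant tracks changes nothing. Since $K$ fills the same boundary with a constant left edge, the boundary loop $G\cdot k\cdot(-G')\cdot(-\lambda)$ is homotopic to $-\lambda$.
\item So the square exists if and only if $\lambda$ is null in $\pi_1(\mathrm{Map}(\Sigma,N),f)$, which is exactly what you do not know.
\item If you leave the right edge free, the three-sided extension always exists. But then $K$ is never used, and the new end track is an arbitrary path of maps from $G_1$ to $G'_1$.
\item That is admissible only under the completely literal reading of Definition \ref{def2.1}, where the end track of $J$ is unconstrained. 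Under that reading, contracting via $J(x,t,s)=H(x,(1-s)t)$ shows that any two homotopies of $f$ with isotopic endpoints are equivalent. Then $\mathscr{H}^f_0$ is trivial and the statement is vacuous.
\end{itemize}

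Under the reading that gives the statement content (the end track of $J$ must be an isotopy), no repair of (ii) exists, because $\Phi$ is then not well defined.

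\begin{itemize}
\item If $H\sim H'$ via $J$ with end track an isotopy $\kappa$, then $H'$ is homotopic rel both ends to $H\cdot\kappa$. Since $\kappa$ adds no double points, $\mu$ is an invariant of the class.
\item Put $f(\Sigma)$ in a $5$-ball in some $N$ with $\pi_1N$ free on $a,b$, so $f_*\pi_1\Sigma=1$.
\item Let $\lambda$ be the loop of embeddings that pushes the ball once around $b$ and returns to the identity.
\item Let $G$ be a homotopy from $f$ to $a\cdot f$ with one double point, so $\mu(G)=\epsilon\,\overline{a}$ with $\epsilon=\pm1$.
\item Then $[\lambda]=[E]$ via $J(x,t,s)=\lambda(x,(1-s)t)$, whose end track is an isotopy.
\item By the conjugation formula used in Proposition \ref{prop3.25}, $\mu(\lambda\cdot G)=\epsilon\,\overline{bab^{-1}}$. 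This differs from $\epsilon\,\overline{a}$ in $\mathbb{Z}[\pi_1N]/\langle\overline{1},\overline{g}+\overline{g}^{-1}\rangle$ because $bab^{-1}\notin\{a,a^{-1}\}$.
\item Hence $[\lambda\cdot G]\neq[E\cdot G]$, even though $[\lambda]=[E]$.
\end{itemize}

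So your argument, and the paper's, is valid only under your fallback, where $J$ is constant on both ends of the $t$-interval. Your freeness step also silently needs that reading. Passing from $H\cdot G\sim G$ to $H\cdot G\cdot(-G)\sim G\cdot(-G)$ requires gluing along the end track of the equivalence, which is now a loop at $G_1$. That is the same obstacle you flagged for the left factor, and it is left unaddressed there.
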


\begin{proof}
    If $G$ is a homotopy of $f$ such that $G_1=f'$ is another embedding, then $H\cdot G$ is a homotopy from $f$ to $f'$, and so the group action is closed. And for $\forall H,H'\in\mathscr{H}^f_0$ and $G\in\mathscr{H}^f$, it is easy to show $(H\cdot H')\cdot G=H\cdot(H'\cdot G)$, so the group action is well-defined.\par
    Moreover, if $H\in\mathscr{H}^f_0$ and $G\in\mathscr{H}^f$ satisfies $[H\cdot G]=[G]$, then $H\cdot G\sim G$, hence $H\cdot G\cdot(-G)\sim G\cdot(-G)$, thus $H\sim E$, which means $[H]=1$, so the group action is faithful.
\end{proof}

Denote by $\mathscr{H}^f/\mathscr{H}^f_0$ the set of orbits under the action of $\mathscr{H}^f_0$. Recall that we define the projection $p:\pi_0\textup{Emb}(\Sigma,N)\twoheadrightarrow\pi_0\textup{Map}(\Sigma,N)$. We now obtain the following important bijection.

\begin{theorem}\label{thm3.8}
    Let $P$ denote the map
    \begin{align*}
    P:p^{-1}([f]) & \rightarrow\mathscr{H}^f/\mathscr{H}^f_0\\
    g & \mapsto \overline{[H]}
    \end{align*}
    where $H$ is a homotopy connecting $f$ and $g$ and $\overline{[H]}$ denote the orbit class of the homotopy class of $H$. Then $P$ is a bijection.
\end{theorem}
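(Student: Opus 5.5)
I would prove Theorem \ref{thm3.8} by checking, in order, that $P$ is well-defined, surjective, and injective. Throughout I use Definition \ref{def2.1}: elements of $p^{-1}([f])$ are isotopy classes of embeddings homotopic to $f$. Two homotopies $H,H'$ from $f$ are equivalent exactly when their endpoints $H_1,H'_1$ are isotopic and $H,H'$ are homotopic with the $t=0$ end fixed at $f$.

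\emph{Well-definedness.} Let $g$ and $g'$ represent the same isotopy class, and let $H$ run from $f$ to $g$ and $H'$ from $f$ to $g'$. I must show $[H']\in\mathscr{H}^f_0\cdot[H]$.

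Choose an isotopy $K$ from $g'$ to $g$. Then $H'\cdot K\cdot(-H)$ is a self-homotopy of $f$; call its class $[S]\in\mathscr{H}^f_0$. The concatenation $S\cdot H=H'\cdot K\cdot(-H)\cdot H$ is homotopic, with the initial end fixed, to $H'\cdot K$. This uses the standard reparametrization homotopies, exactly as in the proof of Proposition \ref{prop3.2}, which cancel $(-H)\cdot H$.

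It remains to see that $H'\cdot K$ is equivalent to $H'$. The end $g$ of $H'\cdot K$ is isotopic to the end $g'$ of $H'$. Shrinking $K$ along its parameter gives a homotopy from $H'\cdot K$ to $H'$ that fixes $f$ at $t=0$. Hence $[H']=[S]\cdot[H]$, and in the same way the choice of $H$ for a fixed $g$ does not affect the orbit. So $P$ is well-defined.

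\emph{Surjectivity.} Any $[G]\in\mathscr{H}^f$ ends at an embedding $G_1$, since homotopies in $\mathscr{H}^f$ run between embeddings. The embedding $G_1$ is homotopic to $f$, so it defines a class in $p^{-1}([f])$, and $P([G_1])=\overline{[G]}$. If one instead allows arbitrary endpoints, the endpoint can be perturbed to an embedding by genericity, since $2\cdot 2<5$. That perturbation is a small homotopy and does not change the class.

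\emph{Injectivity.} Suppose $P(g)=P(g')$, with $H$ from $f$ to $g$ and $H'$ from $f$ to $g'$, so that $[H']=[S\cdot H]$ for some $[S]\in\mathscr{H}^f_0$. Since $S\cdot H$ ends at $g$, the definition of the equivalence relation makes $H'_1=g'$ isotopic to $(S\cdot H)_1=g$. So $g$ and $g'$ are isotopic.

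The main subtlety is that injectivity is essentially built into Definition \ref{def2.1}, which requires isotopic endpoints. The real content is well-definedness: one must make sure that appending an isotopy, and the reparametrization homotopies, respect the boundary conditions of that definition. One might also check whether the equivalence relation should instead be generated by homotopies rel endpoints; in that case the isotopy $K$ would have to be absorbed through the group action. I would handle this uniformly by always writing the comparison as $H'\simeq S\cdot H$ with $S=H'\cdot K\cdot(-H)$.
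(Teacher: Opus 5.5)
Your proof is correct and follows the paper's route: the same three checks, with the comparing self-homotopy built by concatenating one homotopy with the reverse of the other (the paper uses $G\cdot(-G')$, you use $H'\cdot K\cdot(-H)$). The differences are refinements in your favor. Inserting the isotopy $K$ explicitly handles a change of representative within the isotopy class, which the paper leaves implicit. You also read injectivity directly off the isotopic-endpoints clause of Definition~\ref{def2.1}, rather than through the paper's looser comparison of $-G'\cdot H\cdot G$ with the trivial homotopy $E$.
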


\begin{proof}
    To show that $P$ gives a well-defined map from $p^{-1}([f])$ to $\mathscr{H}^f/\mathscr{H}^f_0$, for every $g\in p^{-1}([f])$ and two homotopies $G$ and $G'$ connecting $f$ and $g$, notice that $G\cdot(-G')$ is a homotopy from $f$ to $f$ itself, so $[G\cdot(-G')]\in\mathscr{H}^f_0$, and $(G\cdot(-G'))\cdot G'\sim G$, so $G$ and $G'$ lie in the same orbit, which means $P$ is well-defined. For surjectivity, just notice that every homotopy of $f$ is a homotopy that connects $f$ and another embedding $f'$. To show injectivity, let two embeddings $g$ and $g'$ satisfy $P(g)=P(g')$. This means that there exist two homotopies $G$ and $G'$ such that $G$ connects $f$ and $g$, while $G'$ connects $f$ and $g'$, and there exists a self homotopy $H$ of $f$ such that $H\cdot G\sim G'$. So $-G'\cdot H\cdot G\sim E$ which is an isotopy, and since $-G'\cdot H\cdot G$ is a homotopy connecting $g'$ and $g$, we learn that $g$ is isotopic to $g'$.
\end{proof}

Now we consider the action of $\mathscr{G}_2,\mathscr{G}_1$ and $\mathscr{G}_0$.

\begin{definition}
    Define the action $\mathscr{G}_2\times \mathscr{H}^f\rightarrow\mathscr{H}^f$ as the restriction of $\Phi$ on $\mathscr{G}_2$ which is a normal subset of $\mathscr{H}_0^f$. Denote by $\mathscr{H}^f/\mathscr{G}_2$ the orbit space and $q_2$ the quotient map $\mathscr{H}^f\rightarrow\mathscr{H}^f/\mathscr{G}_2$.
\end{definition}

\begin{definition}
    Let $[H^1]\in\textup{im}(r_2)$ be a self-homotopy of $\Sigma^1$, and let $[H]$ be an extension of $[H^1]$ to a self-homotopy of $\Sigma$. Denote by $\Psi$ the action of $\textup{im}(r_2)$ on $\mathscr{H}^f/\mathscr{G}_2$, such that $\Psi([H^1],[G])=q_2([H\cdot G])$.
\end{definition}

\begin{proposition}
    $\Psi$ is a well-defined faithful group action, and there exists a bijection $(\mathscr{H}^f/\mathscr{G}_2)/\textup{im}(r_2)\cong\mathscr{H}^f/\mathscr{H}^f_0$
\end{proposition}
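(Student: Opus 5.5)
The plan is to check four things in turn: that $\Psi$ does not depend on choices, that it is a group action, that it is faithful, and that its orbit space agrees with $\mathscr{H}^f/\mathscr{H}^f_0$. All of them reduce to the exact sequence $1\to\mathscr{G}_2\to\mathscr{H}^f_0\xrightarrow{r_2}\textup{im}(r_2)\to 1$ together with Proposition \ref{prop3.6}.

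\emph{Independence of choices.} Let $[H]$ and $[H']$ be two extensions of the same $[H^1]\in\textup{im}(r_2)$. Since $r_2$ is a homomorphism, $r_2([H'\cdot(-H)])$ is trivial, so $[H']=[K\cdot H]$ for some $[K]\in\mathscr{G}_2=\ker r_2$. Then
$$[H'\cdot G]=[K\cdot(H\cdot G)],$$
which lies in the same $\mathscr{G}_2$-orbit as $[H\cdot G]$. Next replace $[G]$ by $[K'\cdot G]$ with $[K']\in\mathscr{G}_2$. Because $\mathscr{G}_2$ is normal in $\mathscr{H}^f_0$ (Proposition \ref{prop3.7}), we have $H\cdot K'=K''\cdot H$ for some $[K'']\in\mathscr{G}_2$. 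Hence $[H\cdot K'\cdot G]=[K''\cdot H\cdot G]$ and $q_2$ takes the same value. So $\Psi$ is well defined on $\textup{im}(r_2)\times\mathscr{H}^f/\mathscr{G}_2$.

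\emph{Action axioms.} If $H$ extends $H^1$ and $H'$ extends $H'^1$, then $H\cdot H'$ extends $H^1\cdot H'^1$, because restriction commutes with concatenation. Associativity of $\Phi$ then gives $\Psi([H^1],\Psi([H'^1],[G]))=\Psi([H^1]\cdot[H'^1],[G])$. The constant homotopy $E$ extends the identity, so the identity acts trivially.

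\emph{Faithfulness.} This is the step needing the most care, and I would read it as freeness of the action on orbits. Suppose $\Psi([H^1],q_2[G])=q_2[G]$. Then $[H\cdot G]=[K\cdot G]$ for some $[K]\in\mathscr{G}_2$. By the cancellation argument in the proof of Proposition \ref{prop3.6}, concatenating with $-G$ gives $H\sim K$. Hence $[H^1]=r_2[H]=r_2[K]=1$. The only subtle point is that the cancellation there is an equivalence of homotopies of $\Sigma$ rel $f$, and this is preserved by restriction to $\Sigma^1$.

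\emph{Bijection of orbit spaces.} Define $(\mathscr{H}^f/\mathscr{G}_2)/\textup{im}(r_2)\to\mathscr{H}^f/\mathscr{H}^f_0$ by sending the class of $q_2[G]$ to the $\mathscr{H}^f_0$-orbit of $[G]$. It is well defined because both $\mathscr{G}_2$ and the extensions $H$ lie in $\mathscr{H}^f_0$, and it is clearly surjective. For injectivity, suppose $[G']=[H\cdot G]$ with $[H]\in\mathscr{H}^f_0$. Then $r_2[H]\in\textup{im}(r_2)$, and by the first step $\Psi(r_2[H],q_2[G])=q_2[H\cdot G]=q_2[G']$, so the two classes coincide. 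Every element of $\textup{im}(r_2)$ admits an extension by definition, so Lemma \ref{lemma3.8} is needed only implicitly here.
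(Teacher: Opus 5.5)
Your proposal is correct and follows essentially the same route as the paper. Two extensions of $[H^1]$ differ by an element of $\mathscr{G}_2=\ker r_2$; the action axioms follow because restriction commutes with concatenation; faithfulness (really freeness) comes from the cancellation argument of Proposition \ref{prop3.6}; and the bijection comes from identifying the fibres of $q_2[G]\mapsto\overline{[G]}$ with $\Psi$-orbits. The only addition is your explicit check, via normality of $\mathscr{G}_2$, that $\Psi$ does not depend on the representative $[G]$ of $q_2[G]$, which the paper leaves implicit.
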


\begin{proof}
    For $[H^1]\in\textup{im}(r_2)$, if there are two homotopies $[H],[H']\in\mathscr{H}^f_0$ such that $r_2([H])=r_2([H'])$, then $r_2([H]\cdot[H']^{-1})=1$, thus $[H]\cdot[H']^{-1}\in\mathscr{G}_2$. Thus $q_2([H']\cdot[G])=q_2(([H]\cdot[H']^{-1})\cdot[H']\cdot[G])=q_2([H]\cdot[G])$. So $\Psi$ is well-defined. Moreover, for $[H^1_1],[H^1_2]\in\textup{im}(r_2)$, $\Psi([H^1_1],\Psi([H^1_2],[G]))=[H^1_1]\cdot[H^1_2]\cdot[G]=\Psi([H^1_1]\cdot[H^1_2],[G])$, therefore, $\Psi$ is a group action. Finally, if $\Psi([H^1],[G])=q_2([H]\cdot[G])=q_2[G]$ for $[G]\in\mathscr{H}^f$, then there exists $[K]\in\mathscr{G}_2$ such that $[K]\cdot[H]\cdot[G]=[G]$, thus $[H]=[K]^{-1}\in\mathscr{G}_2$, which means that $[H^1]=r_2([H])=1$, therefore $\Psi$ is a faithful action.\par
    Moreover, consider the map
    \begin{align*}
    Q:\mathscr{H}^f/\mathscr{G}_2 & \rightarrow\mathscr{H}^f/\mathscr{H}^f_0\\
    q_2[G] & \mapsto \overline{[G]},
    \end{align*}
    since $\mathscr{G}_2$ is a normal subgroup of $\mathscr{H}^f_0$, $Q$ is a well-defined surjection. For every $\overline{[G]}\in\mathscr{H}^f/\mathscr{H}^f_0$, $Q(q_2[G]')=\overline{[G]}$ if and only if there exists $[H]\in\mathscr{H}^f_0$ such that $[H]\cdot[G']=[G]$, and so $Q^{-1}(\overline{[G]})=q_2(\Phi(\mathscr{H}^f_0,[G]))=\Psi(\textup{im}(r_2),[G])$. So $Q$ induces a bijection $\tilde{Q}:(\mathscr{H}^f/\mathscr{G}_2)/\textup{im}(r_2)\rightarrow\mathscr{H}^f/\mathscr{H}^f_0$.
\end{proof}

Similarly, since $\mathscr{G}_1$ is a normal subgroup of $\textup{im}(r_2)$, we may define an action $\mathscr{G}_1\times(\mathscr{H}^f/\mathscr{G}_2)\rightarrow(\mathscr{H}^f/\mathscr{G}_2)$ as the restriction of $\Psi$. Let $[H^*]\in\textup{im}(r_1\circ r_2)$ be a self-homotopy of $e^0$, and let $[H^1]$ be an extension of $[H^*]$ to a self-homotopy of $\Sigma^1$. We can also define an action $\Gamma$ of $\textup{im}(r_1\circ r_2)=\mathscr{G}_0$ on $(\mathscr{H}^f/\mathscr{G}_2)/\mathscr{G}_1$, such that $\Gamma([H^*],[G])=q_1(\Psi([H^1],[G])$. And by a similar discussion, it can be shown that there exists the bijection $R:((\mathscr{H}^f/\mathscr{G}_2)/\mathscr{G}_1)/\mathscr{G}_0\rightarrow\mathscr{H}^f/\mathscr{H}^f_0$.

\subsection{Action of Self-homotopies on $\mathbb{A}_{[f]}$}

Now we consider the actions on the target set. Let us start with a more detailed discussion of $\mathscr{G}_2,\mathscr{G}_1$ and $\mathscr{G}_0$.

\begin{theorem}
    There exists a group isomorphism $\mathscr{G}_2\cong\pi_3(N)$.
\end{theorem}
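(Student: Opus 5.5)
The plan is to identify $\mathscr{G}_2$, the group of self-homotopies of $f$ that are stationary on the $1$-skeleton $\Sigma^1$, with homotopy classes of maps of a $3$-sphere into $N$. A self-homotopy $H$ in $\mathscr{G}_2$ restricts on $\Sigma^1\times I$ to the constant homotopy $f|_{\Sigma^1}$. Its only nontrivial data therefore lives over the $2$-cell $e^2\times I$.

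On $\partial(e^2\times I)=(e^2\times\{0\})\cup(\partial e^2\times I)\cup(e^2\times\{1\})$ the map $H$ agrees with
\[
f|_{e^2}\cup f|_{\partial e^2}\circ \mathrm{pr}\cup f|_{e^2}.
\]
Gluing the two copies of $f|_{e^2}$ along the constant boundary piece identifies $H|_{e^2\times I}$ with a relative map. Concretely, I would form the difference element: the $3$-ball $e^2\times I$ has boundary a $2$-sphere on which $H$ factors through $e^2\cup_{\partial}e^2$ mapped by $f|_{e^2}\cup f|_{e^2}$. Collapsing $\partial e^2\times I$ to $\partial e^2$ and folding the two copies of $e^2$ together, this boundary map is canonically null-homotopic via $f|_{e^2}$ itself. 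Using that null-homotopy, $H|_{e^2\times I}$ extends to a map $S^3=(e^2\times I)\cup_{\partial} D^3\to N$. This gives a well-defined element $d(H)\in\pi_3(N)$; if needed I would take the free homotopy class and use the whisker $\nu$ to fix a basepoint, and the whisker is canonical here because the basepoint $e^0$ is fixed throughout. This is just the standard obstruction-theoretic difference cochain of $H$ and the constant homotopy $E$ on the top cell, in the sense of \cite{Hatcher2002}.

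The steps are as follows.
\begin{enumerate}
\item Show $d$ is well defined on equivalence classes. A homotopy $J$ between $H$ and $H'$ rel $\Sigma\times\{0\}$ and with fixed $1$-skeleton (by the definition of $\mathscr{H}^f_0(A,N)$, homotopies are rel boundary) gives a homotopy of the associated $3$-spheres.
\item Show $d$ is a homomorphism. Concatenation in $t$ corresponds to stacking $e^2\times I$ cylinders, which corresponds to the pinch map $S^3\to S^3\vee S^3$. Hence $d(H\cdot H')=d(H)+d(H')$.
\item Show surjectivity. Given $\alpha\in\pi_3(N)$, pinch a small $3$-ball in the interior of $e^2\times I$ inside the constant homotopy $E$ and map it via $\alpha$ (connected along an arc to the basepoint). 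The result is a self-homotopy in $\mathscr{G}_2$ with $d=\alpha$.
\item Show injectivity. If $d(H)=0$, the extension of $H$ over $S^3$ bounds a $4$-ball. This yields a homotopy rel $\partial(e^2\times I)$ from $H|_{e^2\times I}$ to $E|_{e^2\times I}$, and extending by the identity on $\Sigma^1\times I\times I$ gives $[H]=[E]$.
\end{enumerate}

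The main obstacle is a subtlety in the equivalence relation of Definition \ref{def2.1}. Classes in $\mathscr{H}^f_0$ are homotopies rel $\Sigma\times\{0\}$, with the endpoint allowed to move by isotopy, so I must make sure the map into $\mathscr{H}^f_0$ really uses homotopies rel both ends and rel $\Sigma^1$. If the endpoint only needs to be isotopic, a self-isotopy of $f$ could change $d$. I would handle this by restricting to representatives where $H_1=f$ on the nose, as in the definition of $\mathscr{H}^f_0(A,N)$ (homotopies fixing the boundary).

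A secondary point is the $\pi_1$-action: a priori one gets free homotopy classes $[S^3,N]=\pi_3(N)/\pi_1(N)$. I would handle this by observing that $e^0$ stays fixed, so the whisker gives a canonical basing and the based class in $\pi_3(N)$ is determined.
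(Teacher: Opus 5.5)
Your $d(H)$ is exactly the paper's $[S^G]=[B^G\cup(-B^E)]$, and your Steps 2--4 mirror its homomorphism, surjectivity and injectivity claims. The gap is Step 1, and it cannot be closed. Equivalence in $\mathscr{H}^f_0(\Sigma,N)$ is homotopy rel $\partial(\Sigma\times I)=\Sigma\times\{0,1\}$ only; your parenthetical silently upgrades this to rel $\Sigma^1\times I$. Two representatives that are both stationary on $\Sigma^1$ can be joined by a homotopy $J$ that sweeps $\Sigma^1\times I$ through a loop of self-homotopies of $f|_{\Sigma^1}$, and $d$ is not invariant under such a $J$.

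Concretely, let $H_\alpha$ be $E$ with $\alpha\in\pi_3(N)$ inserted in a small ball $B\subset\textup{int}(e^2)\times(0,1)$. For a loop $\gamma$ in $\Sigma$, drag $B$ once around $\gamma\times\{1/2\}$. This crosses $\Sigma^1\times I$ but never meets $\Sigma\times\{0,1\}$, so it is an allowed equivalence, and it returns $B$ with $\alpha$ transported along $f(\gamma)$. Hence $[H_\alpha]=[H_{g\alpha}]$ in $\mathscr{G}_2$ for every $g\in G=f_*(\pi_1\Sigma)$, while $d(H_\alpha)=\alpha$ and $d(H_{g\alpha})=g\alpha$. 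Basing at the fixed point $e^0$ does not help: $e^2\times I$ is simply connected, and the ambiguity comes from loops in $\Sigma$, not from the whisker.

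What your steps do prove is more limited. For homotopies stationary on $\Sigma^1$, taken modulo homotopies rel $\Sigma^1\times I\cup\Sigma\times\{0,1\}$, $d$ is a bijection onto $\pi_3(N)$; this set is $\pi_1$ of the fibre of the restriction fibration $\textup{Map}(\Sigma,N)\to\textup{Map}(\Sigma^1,N)$ over $f|_{\Sigma^1}$. So insertion $\alpha\mapsto[H_\alpha]$ is a surjective homomorphism $\pi_3(N)\to\mathscr{G}_2$, and its kernel is the image of the boundary map $\partial:\pi_2(\textup{Map}(\Sigma^1,N),f|_{\Sigma^1})\to\pi_3(N)$. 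That image contains $(g-1)\pi_3(N)$ for $g\in G$: on based families $\partial$ is the twisted coboundary with cokernel $H^2(\Sigma;\pi_3N)\cong\pi_3(N)_G$. Families that move the basepoint add Whitehead products, e.g.\ $[x,y]$ for $y\in\pi_2(N)$ when $N$ is simply connected and $x\in\pi_2(N)$ is the class of $f$.

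So the statement fails in general. Take $N$ the mapping torus of the factor swap on $S^2\times S^2$ and $\Sigma=S^1\times c$, with $c$ a small circle in the pointwise-fixed diagonal. Then $\pi_3(N)\cong\mathbb{Z}^2$ while $G$ exchanges the two Hopf classes, so $\mathscr{G}_2$ is cyclic. Even for simply connected $N=S^2\times S^3$ and $\Sigma$ a torus representing the generator $x$ of $\pi_2$, the nonzero class $[x,x]=\pm2\eta$ dies. Thus $\mathscr{G}_2$ is a quotient of $\mathbb{Z}\oplus\mathbb{Z}/2$ rather than $\mathbb{Z}^2$.

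The paper's proof has the same omission: it never checks that $[S^G]$ is independent of the representative of a class in $\mathscr{G}_2$. The most you can establish is $\mathscr{G}_2\cong\pi_3(N)/\textup{im}(\partial)$.
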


\begin{proof}
    Let $[G]\in\mathscr{G}_2$. So $G$ is a self-homotopy of $f$ fixing the 1-skeleton $\Sigma^1$. Consider the unit homotopy $E=f\times\textup{id}$. By definition of $\mathscr{G}_1$, $G$ agrees with $E$ on the 2-skeleton $\Sigma^1\times I$ of the 3-manifold $\Sigma\times I$. Let $B^G$ be the immersion of the closed 3-ball $G|_{D^2\times I}:D^2\times I\rightarrow N\times I$, and $B^E$ be $E|_{D^2\times I}=f|_{D^2}\times\textup{id}$, where $D^2=\overline{e^2}$ is the closure of the 2-cell. Gluing $B^G$ and $-B^E$ along their boundaries produces a map of 3-spheres $S^G\coloneqq B^G\cup (-B^E):S^3\rightarrow N\times I$. And we define the map $\mathscr{G}_1\rightarrow\pi_3(N)=\pi_3(N\times I)$ as $[G]\mapsto[S^G]$. By the definition of product of $\mathscr{G}_2$ and $\pi_3(N)$, $[G\cdot G']\mapsto[S^G]\cdot[S^{G'}]$. If $[S^G]$ is trivial, then $B^G$ is homotopic to $B^E$, which means that $[G]$ is trivial. Thus, the map is an injection. On the other hand, for every $[S]\in\pi_3(N)$, by a homotopy, $S$ can be written as a union of maps $B\cup (-B^E):S^3=D^2\times I\cup D^2\times I\rightarrow N\times I$, then we may construct a homotopy $G\in\mathscr{G}_2$ such that $B^G=B$, which means that $S^G=S$.
\end{proof}

\begin{definition}
    Define $\phi:\mathscr{G}_2\rightarrow\mathbb{A}_{[f]}$ by $[G]\rightarrow\mu([S^G])+\overline{\lambda}_N(S^G,f(\Sigma))$, where $\overline\lambda_N$ is the class of the common intersection invariant $\lambda_N$ in the quotient group $\mathbb{A}_{[f]}$. Denote by $\mathbb{A}_{f_1}$ the quotient group $\mathbb{A}_{[f]}/\textup{im}(\phi)$.
\end{definition}

\begin{lemma}
    If $[G]\in\mathscr{G}_2$, then $\mu([G])=\phi(S^G)$.
\end{lemma}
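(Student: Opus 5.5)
We compute $\mu([G])$ directly from a good representative of $G$, viewed as a map $\Sigma\times I\to N\times I$. We use the decomposition $\Sigma\times I=(\Sigma^1\times I)\cup(\overline{e^2}\times I)$ from the proof that $\mathscr{G}_2\cong\pi_3(N)$.

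\textbf{Step 1: choose the representative.} Since $G$ fixes the 1-skeleton, we may assume $G=E=f\times\mathrm{id}$ on $\Sigma^1\times I$ and on a collar of $\partial(\overline{e^2}\times I)$. Only the ball $B^G$ differs from the embedded ball $B^E=f|_{D^2}\times\mathrm{id}$. Using the isomorphism $\mathscr{G}_2\cong\pi_3(N)$, we further homotope $B^G$, rel boundary, into the form
\[
B^G = B^E \,\#\, S,
\]
an interior connected sum of $B^E$ with a generic immersed $3$-sphere $S$ representing $[S^G]$. The sphere is attached by a thin tube along an arc $\tau$ from a point of $B^E$. By the invariance proposition after Definition \ref{def2.5}, this does not change $\mu$.

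\textbf{Step 2: sort the self-intersections.} The double points of $G$ fall into three types:
\begin{enumerate}
\item[(i)] double points of $E$: there are none, since $E$ is an embedding;
\item[(ii)] self-intersections of $S$: these are counted by Wall's $\mu(S^G)$, with group elements read through $\nu$ and $\tau$ and then projected to double cosets $G\backslash\pi_1/G$;
\item[(iii)] intersections of $S$ with the rest of $E(\Sigma\times I)=f(\Sigma)\times I$ outside the tube: these are counted by $\lambda_N(S^G,f(\Sigma)\times I)$, which equals $\lambda_N(S^G,f(\Sigma))$ after projecting along $I$.
\end{enumerate}
The tube itself can be chosen embedded and disjoint from everything, because $\dim =6$ and the tube core is $1$-dimensional. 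For each type, we check that the group element $\nu\gamma_1\gamma_2^{-1}\nu^{-1}$ of Definition \ref{def2.5} maps to the corresponding term in $\mathbb{A}_{[f]}$. Summing with signs then gives
\[
\mu([G])=\mu(S^G)+\overline{\lambda}_N(S^G,f(\Sigma))=\phi([G]).
\]

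\textbf{Main obstacle: type (iii).} Here the two sheets at an intersection point are one sheet of $S$ and one sheet of $f(\Sigma)\times I$. The path to the $f$-sheet runs through $\Sigma\times I$, and its ambiguity is exactly left multiplication by $G=f_*\pi_1\Sigma$. So $\lambda_N$, which a priori lives in $\mathbb{Z}[\pi_1 N]$ or in $\mathbb{Z}[G\backslash\pi_1N]$ (only one side of the element is ambiguous), has a well-defined image in the double coset module. We also need that reordering the sheets and the cusp normalization are consistent with the relations $\overline g^{-1}=-\overline g$ and $\overline1=0$.

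We must also check that the choice of the tube arc $\tau$ (equivalently, of the basepoint path for $S$) does not matter. A different choice changes the group elements by a common element of $\pi_1$, and we would argue that this element can be taken in $G$ because the attaching region lies on $f(\Sigma)$. If this fails, we would instead use that $[S^G]$ is a based class determined by $G$, so $\tau$ is canonically determined by the basepoint of $e^2$.
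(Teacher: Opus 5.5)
Your proposal is correct, but it is organized differently from the paper's proof. The paper keeps an arbitrary representative $G$ and analyzes the glued sphere $S^G=B^G\cup(-B^E)$. A dimension count shows every double point of $G$ lies in $B^G$, so $\mu(S^G)$ equals $\mu([G])$ plus the cross term between $B^G$ and $-B^E$. The paper then identifies this cross term with $\overline{\lambda}_N(S^G,f)$ by pushing the contractible $f(D^2)\times I$ off itself along a normal vector field. You go the other way: you use injectivity of $[G]\mapsto[S^G]$ to put $G$ in the normal form $B^E\#_\tau S$, and then read off the double points of $G$ directly.

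The paper's route needs no normal form and no discussion of basings. Yours avoids the seam of $S^G$: there $B^G$ and $B^E$ coincide on a collar, so the paper's cross term only makes sense after a push-off near the common boundary. Your route also produces the $+$ sign in front of $\overline{\lambda}_N$ in $\phi$ directly, whereas the paper has to track the orientation reversal on $-B^E$ through two identifications.

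On the basing question, only your second resolution is right. If you change $\tau$ by a loop $h$ while keeping $S$ fixed, the based class $[S]$ becomes $h\cdot[S]$. That is a different element of $\mathscr{G}_2\cong\pi_3(N)$, and its invariant can differ when $h\notin G$. So $(S,\tau)$ must be chosen to represent the based class $[S^G]$. The remaining freedom (sliding the attaching point along $f(\Sigma)\times I$, or changing the path inside $E$) only alters group elements by elements of $G$, which the double cosets absorb.
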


\begin{proof}
    By definition, $S^G$ is the union of 2 maps of 3-balls $B^G$ and $B^E$. Then, by the dimension condition, all self-intersections contributing to $\mu([G])$ come from the 3-cell $B^G$. And the self-intersections contributing to $\mu(S^G)$ come from $B^G$ and the intersections between $B^G$ and $-B^E$. Therefore,
    $$\mu(S^G)=\mu([G])+\overline{\lambda}(B^G,-B^E)=\mu([G])+\overline{\lambda}([G],f\times \textup{id})$$
    And we have $\overline{\lambda}_N(S^G,f)=\overline{\lambda}(S^G,f\times\textup{id})=\overline{\lambda}([G],f\times\textup{id})$, where the first 2 equations hold due to the dimension condition, and the last equation holds because $f(D^2)\times I$
    is contractible, therefore it admits a trivial normal bundle in N,  so we can slightly move it along some nonzero normal bundle to remove the self-intersections, thus we obtain
    $$\mu([G])=\mu(S^G)+\overline{\lambda}_N(S,f)=\phi(S).$$
\end{proof}

\begin{lemma}\label{lemma3.12}
    The map $\mu:\mathscr{H}^f\rightarrow\mathbb{A}_{[f]}$ induces a well-defined map $\mu^1:\mathscr{H}^f/\mathscr{G}_2\rightarrow \mathbb{A}_{f_1}$. And if $q_2[H]\in\mathscr{H}^f/\mathscr{G}_2$ satisfies $\mu^1(q_2[H])=0$, then $q_2[H]=q_2[E]$.
\end{lemma}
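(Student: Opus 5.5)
The plan is to prove the two assertions of Lemma \ref{lemma3.12} separately: first that $\mu$ descends to the quotient, then that $\mu^1$ detects the class of $E$.

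\emph{Well-definedness.} I would show that for $[G]\in\mathscr{G}_2$ and $[H]\in\mathscr{H}^f$ the concatenation satisfies
$$\mu([G\cdot H])=\mu([G])+\mu([H]).$$
Since $G$ is a self-homotopy of $f$, the image of $G\cdot H$ in $N\times I$ is the union of $G$ in $N\times[0,\tfrac12]$ and $H$ in $N\times[\tfrac12,1]$. These pieces are level-preserving, so they are disjoint except along the common boundary slice. That slice is the embedding $f$, so it contributes no double points. Hence the double points of $G\cdot H$ are exactly those of $G$ together with those of $H$. The whiskers can be chosen through the level-$0$ basepoint, and the fundamental group data $G=f_*\pi_1\Sigma$ is the same for both. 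Therefore the signed group elements add in $\mathbb{A}_{[f]}$.

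By the preceding lemma, $\mu([G])=\phi(S^G)\in\textup{im}(\phi)$. It follows that $\mu([G\cdot H])\equiv\mu([H])$ in $\mathbb{A}_{f_1}=\mathbb{A}_{[f]}/\textup{im}(\phi)$, so $\mu^1(q_2[H])\coloneqq[\mu([H])]$ is well defined. This is also where the $\mathscr{G}_2$-action on $\mathbb{A}_{[f]}$ in the diagram is identified with translation by $\phi$.

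\emph{Detection.} Suppose $\mu^1(q_2[H])=0$. Then $\mu([H])=\phi(S)$ for some $[S]\in\pi_3(N)$. Because $\phi$ is additive on $\pi_3(N)$, we may equally write $\mu([H])=-\phi(S')$ with $S'=-S$. The isomorphism $\mathscr{G}_2\cong\pi_3(N)$ then gives $[G]\in\mathscr{G}_2$ with $S^G=S'$, and so
$$\mu([G\cdot H])=\mu([G])+\mu([H])=\phi(S')+\phi(S)=0.$$
By Theorem \ref{thm2.3}, $G\cdot H$ is homotopic rel boundary to an isotopy $J$.

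To finish I must show $q_2[H]=q_2[E]$, i.e.\ that $[G\cdot H]$ is the trivial element after composing with a suitable isotopy. Under the equivalence of Definition \ref{def2.1}, a homotopy ending at an embedding isotopic to its endpoint is equivalent to any other such homotopy that is homotopic to it rel the $0$-end. In particular $J$ is equivalent to $E$, because the endpoint $J_1$ is isotopic to $f=E_1$, and the straight-line reparametrization of the isotopy $J$ gives the homotopy $J\simeq E$ required in Definition \ref{def2.1}. Hence $[G\cdot H]=[E]$, and therefore $q_2[H]=q_2[G^{-1}\cdot E]$. This equals $q_2[E]$, since $q_2[E]$ only sees $\mathscr{G}_2$-orbits.

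\emph{Main obstacle.} I expect the hardest step to be additivity of $\phi$ on $\pi_3(N)$, which the sign-reversal argument needs. Additivity of $\overline\lambda_N(\cdot,f)$ is clear, but $\mu$ of a sum of $3$-spheres picks up cross terms $\lambda(S,S')$. My first attempt would be to note that the cross terms are intersections of two $3$-spheres in a $6$-manifold, and to argue that they can be realized inside the disjoint slabs $N\times[0,\tfrac12]$ and $N\times[\tfrac12,1]$ coming from the $\mathscr{G}_2$ concatenation. Then they vanish, exactly as in the additivity argument above. If this fails, I would work with $\textup{im}(\phi)$ as the subgroup it generates, which is presumably what the quotient $\mathbb{A}_{f_1}$ means; the sign issue then disappears, but I would need to check that the subgroup is still realized by $\mathscr{G}_2$.
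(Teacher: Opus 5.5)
Your proof is correct and follows the paper's route. Well-definedness comes from additivity of $\mu$ under concatenation with elements of $\mathscr{G}_2$, together with $\mu([G])=\phi(S^G)$; detection cancels $\mu([H])$ by an element of $\mathscr{G}_2$ and applies Theorem \ref{thm2.3}, and your concatenation order $G\cdot H$ and explicit check that the resulting isotopy is equivalent to $E$ are cleaner than the paper's. The obstacle you flag is unnecessary: take $S^G=S$ and use $[G]^{-1}$ instead, since your own additivity applied to $[G^{-1}\cdot G]=[E]$ gives $\mu([G^{-1}])=-\mu([G])$, which is what the paper does (and via the homomorphism $\mathscr{G}_2\cong\pi_3(N)$ this also shows $\phi$ is additive, so your slab argument is not needed).
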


\begin{proof}
    For $q_2[H]=q_2[H']\in\mathscr{H}^f/\mathscr{G}_2$, by definition there exists $[G]\in\mathscr{G}_2$ such that $[G]\cdot [H]=[H']\in\mathscr{H}^f$. And by definition of the intersection number, $\mu([G\cdot H])=\mu([G])+\mu([H])$ since $[G]\in\mathscr{G}_2$ fixes the basepoint, hence, $\mu([G])\in\phi(\mathscr{G}_2)$, and so $\mu([H])$ and $\mu([H'])$ are in the same orbit of $\mathbb{A}_{f_1}=\mathbb{A}_{[f]}/\phi(\mathscr{G}_2).$ Thus the induced map $\mu^1$ is well defined. Moreover, if $\mu^1(q_2[H])=0$, then there exists $[G]\in\mathscr{G}_2$ such that $\mu([H])=\mu([G])$. Thus, $0=\mu([H])-\mu([G])=\mu([H]\cdot[-G])=\mu([H\cdot(-G)])$, so by Theorem \ref{thm2.3}, $(H\cdot(-G))$ is homotopic to $E$, thus $[H]\cdot(-[G])=[E]$, which means $q_2[H]=q_2[E]$.
\end{proof}

We have a similar discussion for $\mathscr{G}_1$.

\begin{definition}
    For a surface $\Sigma$ with genus $g$, the 1-skeleton $\Sigma^1\cong\bigvee_{2g}S^1$. Thus, a homotopy of $\Sigma^1$ fixing the basepoint corresponds to a based immersion $\bigvee_{2g}S^2\rightarrow N\times I$, in other words $\ker(r_1)\cong\pi_2(N)^{2g}$. For an element $a\in\pi_2(N)^{2g}$, denote by $[H^1_a]\in\ker(r_1)$ the homotopy of $\Sigma^1$ corresponding to $a$. Then by the homotopy extension theorem (Lemma \ref{lemma3.8}), $H^1_a:\Sigma^1\times I\rightarrow N\times I$ can be extended to a homotopy $H_a:\Sigma\times I\rightarrow N\times I$ such that $H_a(x,0)=f(x),\forall x\in\Sigma$ and $H_a|_{\Sigma^1\times I}(x,t)=H_a^1(x,t)$. Thus, $H_a|_{\Sigma\times\{1\}}$ agrees with $f$ on the boundary of the 2-cell $e^2$. Gluing $H_a|_{D^2\times\{1\}}$ and $f|_{D^2}$ along the boundary induces a map $S_a:S^2\rightarrow N\times I$. Hence, the procedure implies a map
    \begin{align*}
        \iota:\pi_2(N)^{2g} & \rightarrow \pi_2(N)\\
        a & \mapsto [S_a].
    \end{align*}
\end{definition}

\begin{theorem}
    The map $\iota$ is a well-defined group homomorphism and $\mathscr{G}_1\cong\ker(\iota)$.
\end{theorem}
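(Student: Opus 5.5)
We have to show three things: $\iota$ does not depend on the choices made, $\iota$ is a homomorphism, and $\mathscr{G}_1=\ker(r_1)\cap\textup{im}(r_2)$ (the text intends this intersection) corresponds to $\ker(\iota)$ under $\ker(r_1)\cong\pi_2(N)^{2g}$.

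\textbf{Well-definedness.} I would treat $S_a$ as a difference cochain in the sense of obstruction theory (Hatcher). The $2$-sphere $S_a$ is built from the track of the homotopy $H^1_a$ on $\partial e^2$, glued to $f|_{D^2}$ and to the extended disc $H_a|_{D^2\times\{1\}}$. Suppose $H_a$ and $H'_a$ are two extensions, as given by Lemma \ref{lemma3.8}. Then both $H_a$ and $H'_a$ restrict on $D^2\times I$ to null-homotopies of the same map of $\partial(D^2\times I)\setminus(\textup{int}D^2\times\{1\})$. Gluing them along $\partial e^2\times I\cup D^2\times\{0\}$ shows that $H_a|_{D^2\times\{1\}}$ and $H'_a|_{D^2\times\{1\}}$ are homotopic rel boundary. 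Hence $[S_a]$ is independent of the extension.

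Concretely, $[S_a]$ equals the class of the map $\partial(D^2\times I)\to N$ restricted to the top face, read off by the boundary word of $e^2$. I would therefore expect the formula
\[
\iota(a_1,\dots,a_{2g})=\sum_{i=1}^g\bigl((1-b_i)\cdot a_{2i-1}+(a_i\text{-type terms})\bigr),
\]
namely the Fox-derivative expression of the attaching word $\prod[x_i,y_i]$, with $\pi_1$ acting through $f_*$. I would verify this by tracing the attaching map $\partial e^2\to\Sigma^1$. Each appearance of the letter $x_i$ contributes $\pm$ the sphere $a_{x_i}$, translated by the prefix of the word.

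\textbf{Homomorphism.} This follows from the formula, or directly: concatenating $H^1_a\cdot H^1_b$ gives the extension $H_a\cdot H_b$, and its top-face sphere is the sum of $S_a$ and $S_b$ in $\pi_2$. Because the basepoint is fixed, the spheres are based, and the sum is taken in $\pi_2(N)$.

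\textbf{$\mathscr{G}_1\cong\ker\iota$.} Take $[H^1_a]\in\ker r_1$.

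First, suppose $H^1_a$ extends to a self-homotopy $H$ of $f$. Using $H$ as the extension gives $H|_{D^2\times\{1\}}=f|_{D^2}$, so $S_a$ is $f|_{D^2}\cup -f|_{D^2}$, which is null-homotopic. Hence $a\in\ker\iota$.

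Conversely, suppose $[S_a]=0$. Then $H_a|_{D^2\times\{1\}}$ is homotopic rel $\partial D^2$ to $f|_{D^2}$. I would append this homotopy in a collar $D^2\times[1,1+\epsilon]$, keeping $\Sigma^1$ fixed there, and then reparametrize. This produces a self-homotopy $H$ with $H_1=f$ and $r_2[H]=[H^1_a]$, so $[H^1_a]\in\textup{im}(r_2)$.

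Injectivity of $\mathscr{G}_1\to\pi_2(N)^{2g}$ is automatic, since $\ker r_1\cong\pi_2(N)^{2g}$. That isomorphism holds because a based self-homotopy of the wedge of circles is the same as a map $\Sigma^1\times I/\!\sim$, which is a wedge of $2$-tori pinched to $2$-spheres. The pinching works because the circle's track collapses at its ends: the basepoint is fixed and the ends are identified.

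\textbf{Main obstacle.} The delicate point is well-definedness with respect to the homotopy class of $H^1_a$, as opposed to a particular representative. I would handle it with the same rel-boundary gluing argument, now applied on $\Sigma^1\times I\times I$ and extended over $e^2\times I\times I$ using Lemma \ref{lemma3.8} again. The fourth-dimension track then shows that the top-face spheres are homotopic.
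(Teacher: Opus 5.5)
Your proof is correct and follows the same skeleton as the paper's: you compare two extensions for well-definedness, concatenate for additivity, and use $H|_{\Sigma\times\{1\}}=f$ to get $\mathscr{G}_1\subseteq\ker\iota$. You also correctly read $\mathscr{G}_1$ as $\ker(r_1)\cap\textup{im}(r_2)$. Where you differ, your version is more complete, at three points.

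First, independence of the extension. The paper claims that $H_a|_{D^2\times\{1\}}$ and $H'_a|_{D^2\times\{1\}}$ are both homotopic rel $\partial D^2$ to $f|_{D^2}$. Taken literally, this would make every $S_a$ null-homotopic, i.e.\ $\iota\equiv 0$. Your gluing of the two extensions along $\partial e^2\times I\cup D^2\times\{0\}$ is the correct version: both top discs are homotopic rel boundary to the disc formed by $f|_{D^2}$ together with the annulus traced by $H^1_a$ over $\partial e^2$.

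Second, the reverse inclusion. The paper proves only $\mathscr{G}_1\subseteq\ker\iota$ and then asserts equality. Your collar argument supplies $\ker\iota\subseteq\mathscr{G}_1$: you append to $H_a$ the homotopy of $\Sigma$ rel $\Sigma^1$ induced by a rel-$\partial D^2$ homotopy from $H_a|_{D^2\times\{1\}}$ to $f|_{D^2}$.

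Third, independence of the representative of $[H^1_a]$, which the paper omits. Extending over $e^2\times I\times I$ with the face $D^2\times\{1\}\times I$ left free is unobstructed, because the prescribed part of the boundary is a $3$-ball. That free face is then the required rel-boundary homotopy between the top discs.

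Two minor repairs are needed. Literally, $H_a\cdot H_b$ is undefined: $H_a$ ends at $H_a|_{\Sigma\times\{1\}}$, which differs from $f$ on $e^2$ unless $\iota(a)=0$. The clean fix is your own remark that $[S_a]$ is the class of the sphere built from three pieces: $f|_{D^2}$, the annulus on $\partial D^2\times I$ given by $H^1_a$ composed with the attaching map of $e^2$, and $-f|_{D^2}$. For $H^1_a\cdot H^1_b$ the annuli stack. Inserting $-f|_{D^2}\cup f|_{D^2}$ along the middle circle splits the sphere into those for $a$ and $b$. These are based at the same point because the basepoint track is constant.

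Also, your pinching explanation of $\ker r_1\cong\pi_2(N)^{2g}$ is inaccurate. Both ends of each circle's track map by $f$ restricted to that circle, not to a point. The identification instead comes from differencing with the constant homotopy on the square obtained by cutting the track open. Your Fox-calculus formula is also garbled. Neither point matters: the paper takes $\ker r_1\cong\pi_2(N)^{2g}$ as given, and the formula is never used.
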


\begin{proof}
    Obviously, $\iota(1)=1$, and by the construction of $S_a$, since the two cell $e^2$ are contractible, for $a,b\in\pi_2(N)^{2g}$, $[S_{a\cdot b}]=[S_a]\cdot[S_b]$, thus $\iota(ab)=\iota(a)\iota(b)$. Let $H_a^1$ be a homotopy of $\Sigma^1$, and let $H_a,H_a'$ be two extensions of $H_a^1$. Define $S_a=H_a(D^2,1)\cup_{\partial} f(D^2)$ and $S_a'=H_a'(D^2,1)\cup_\partial f(e^2)$, then we just need to prove $[S_a]=[S_a']$ in $\pi_2N$. Notice that $[S_a]\cdot[S_a']^{-1}=[S_a\cup(-S_a')]=[H_a(D^2,1)\cup_\partial H_a'(D^2,1)]$. And since $H_a(D^2,1),H_a'(D,1)$ are all homotopic to $f(D^2)$ relative to the boundary, $H_a(e^2,1)\cup_\partial H_a'(e^2,1)$ is null-homotopic in $N$, therefore, $[S_a]=[S_a']$ and so the map $\iota$ is a well-defined group homomorphism.\par
    Moreover, since $\mathscr{G}_1=\textup{im}(r_2)\cup\ker(r_1)$, $\mathscr{G}_1$ is a subgroup of $\ker(r_1)$. By definition, for $[H^1]=r_2[H]\in\mathscr{G}_1$, $\iota([H^1])=[S]$ which is generated by gluing $H|_{D^2\times\{1\}}$ and $f|_{D^2}$. However, $[H]\in\mathscr{H}^f_0$, which means that $H|_{\Sigma\times\{1\}}=f$, so $\iota([H^1])=0$. Thus, $\mathscr{G}_1\cong\ker({\iota})$.
\end{proof}

\begin{definition}
    Define 
    \begin{align*}
    \psi:\mathscr{G}_1 & \rightarrow\mathbb{A}_{f_1}\\
    a & \mapsto\mu^1([H_a]).
    \end{align*}
    Denote by $\mathbb{A}_{f_*}$ the quotient group $\mathbb{A}_{f_1}/\psi(\mathscr{G}_1)$.
\end{definition}

In other words, $\psi=\mu^1|_{\mathscr{G}_1}$. And we have the following obvious fact.

\begin{proposition}
    $\psi$ is a group homomorphism.
\end{proposition}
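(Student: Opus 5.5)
The map $\psi$ is the composite of the identification $\mathscr{G}_1\cong\ker(\iota)\subset\pi_2(N)^{2g}$, the assignment $a\mapsto H_a$, and $\mu^1$. The plan is to show that $\psi(a\cdot b)=\psi(a)+\psi(b)$ by realizing the product on the source as a concatenation of homotopies, and then invoking additivity of $\mu$ under concatenation, as in the proof of Lemma \ref{lemma3.12}.

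\textbf{Step 1: well-definedness.} First I check that $\psi$ does not depend on the extension $H_a$ of $H^1_a$. Two extensions $H_a,H_a'$ of the same $H^1_a\in\mathscr{G}_1$ satisfy $r_2([H_a]\cdot[H_a']^{-1})=1$, so they differ by an element of $\mathscr{G}_2$. By Lemma \ref{lemma3.12}, $\mu$ changes only by an element of $\phi(\mathscr{G}_2)$. Hence $\mu^1([H_a])\in\mathbb{A}_{f_1}$ is well defined. The same reasoning shows that homotopic representatives of $a$ give the same value.

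\textbf{Step 2: compatible extensions.} For $a,b\in\mathscr{G}_1$, take the concatenation $H_a\cdot H_b$. Its restriction to $\Sigma^1\times I$ is $H^1_a\cdot H^1_b$, which represents $a\cdot b$ under the isomorphism $\ker(r_1)\cong\pi_2(N)^{2g}$, since concatenation corresponds to the group law. By Step 1, $H_a\cdot H_b$ is therefore a legitimate extension $H_{ab}$. Here it matters that $H_a$ and $H_b$ are genuine self-homotopies of $f$, which holds because $a,b\in\textup{im}(r_2)$; this is what makes the concatenation defined.

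\textbf{Step 3: additivity.} Next I show $\mu([H_a\cdot H_b])=\mu([H_a])+\mu([H_b])$ in $\mathbb{A}_{[f]}$. The immersion $H_a\cdot H_b:\Sigma\times I\to N\times I$ is level-preserving, and the two halves lie over the disjoint slabs $N\times[0,1/2]$ and $N\times[1/2,1]$. So after a generic perturbation supported away from $t=1/2$, the double points split into those of each half, with no cross-intersections. The whisker is based at $\Sigma\times\{0\}$. Transporting it to the second half is a path along $f\times\{1/2\}$, which changes $\alpha(p)$ only by elements of $G$. Thus each double-coset class $\overline{\alpha(p)}$ and each sign $\epsilon(p)$ is unchanged. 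Passing to $\mathbb{A}_{f_1}$ gives $\psi(ab)=\psi(a)+\psi(b)$, and the inverse is handled by $-H_a$.

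The main obstacle is Step 3, specifically justifying that the basepoint transport across $t=1/2$ only multiplies $\alpha(p)$ by elements of $G$. I would argue this using the path $\{x_0\}\times I$ traced by the basepoint, which is stationary because $a\in\ker(r_1)$ fixes the basepoint. Therefore the transport stays inside the image of $\Sigma\times I$.
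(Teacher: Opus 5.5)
Your proposal is correct and follows the paper's implicit argument. The paper calls this fact obvious because $\psi$ is just $\mu^1$ restricted to $\mathscr{G}_1$, and $\mu$ is additive under concatenation when the first factor fixes the basepoint, which is the same fact used in the proofs of Lemmas \ref{lemma3.12} and \ref{lemma3.17}. In Step 3, the decisive point is the one in your last paragraph: the basepoint track of $H_a$ is null-homotopic because $a\in\ker(r_1)$, so the labels of the second-half double points are not conjugated (a nontrivial track $s$ would introduce the twist $s(\cdot)s^{-1}$ of Definition \ref{def3.21}, which does not preserve double cosets); the transport path lying in the image of $\Sigma\times I$ holds automatically and is not, by itself, what makes the argument work.
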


Similarly, combining the map with the group action, we have:

\begin{lemma}\label{lemma3.17}
    The map $\mu^1:\mathscr{H}^f/\mathscr{G}_2\rightarrow\mathbb{A}_{f_1}$ induces a map $\mu^*:(\mathscr{H}^f/\mathscr{G}_2)/\mathscr{G}_1\rightarrow\mathbb{A}_{f_*}$. And if $q_1\circ q_2[H]\in(\mathscr{H}^f/\mathscr{G}_2)/\mathscr{G}_1$ satisfies $\mu^*(q_1\circ q_2[H])=0$, then we have $q_1\circ q_2[H]=q_2\circ q_1[E]\in(\mathscr{H}^f/\mathscr{G}_2)/\mathscr{G}_1$.
\end{lemma}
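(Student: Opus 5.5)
The plan is to mirror the proof of Lemma \ref{lemma3.12} one level up, replacing $\mathscr{G}_2$ by $\mathscr{G}_1$ and $\mu$ by $\mu^1$. There are two parts to check. First, $\mu^1$ descends to the orbit space of the $\mathscr{G}_1$-action $\Psi$, with values in $\mathbb{A}_{f_*}=\mathbb{A}_{f_1}/\psi(\mathscr{G}_1)$. Second, the vanishing of $\mu^*$ detects the orbit of the trivial homotopy $E$.

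\textbf{Well-definedness.} Suppose $q_1q_2[H]=q_1q_2[H']$. By definition there is $[H^1]\in\mathscr{G}_1$ with $\Psi([H^1],q_2[H])=q_2[H']$. Choose an extension $[K]\in\mathscr{H}^f_0$ of $H^1$, so that $q_2[K\cdot H]=q_2[H']$.

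The key claim is additivity:
$$\mu([K\cdot H])=\mu([K])+\mu([H]) \quad \text{in } \mathbb{A}_{[f]}.$$
To see this, note that $K\cdot H$ is a concatenation in the $I$-direction, so it splits into $K$ on $\Sigma\times[0,1/2]$ and $H$ on $\Sigma\times[1/2,1]$. These pieces have disjoint images in $N\times I$, since the homotopy preserves the $I$-coordinate, so every double point lies in exactly one of them. We also need the group-element labels to be unchanged. Take the whisker to run along the basepoint track and choose the arcs $\gamma^p_i$ inside the respective piece. The only subtlety is that $K$ moves the 1-skeleton, so the basepoint track is no longer the constant path. However, $K$ fixes the basepoint because $H^1\in\ker r_1$, so after a change of whisker the labels of $H$ change only by conjugation by a loop in $G$. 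Conjugation by $G$ is trivial on double cosets, so the labels are unaffected.

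With additivity in hand, reduce modulo $\phi(\mathscr{G}_2)$ to get
$$\mu^1(q_2[H'])=\mu^1(q_2[K])+\mu^1(q_2[H]).$$
Here $\mu^1(q_2[K])$ is $\psi([H^1])$, so the two values agree in $\mathbb{A}_{f_*}$. This requires that $\psi$ is independent of the chosen extension $K$. That holds because two extensions differ by an element of $\mathscr{G}_2$, whose $\mu$-value lies in $\phi(\mathscr{G}_2)$ by the lemma computing $\mu([G])=\phi(S^G)$. The same fact shows $\psi$ is a well-defined homomorphism.

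\textbf{Detection of $E$.} Suppose $\mu^*(q_1q_2[H])=0$. Then there is $[H^1]\in\mathscr{G}_1$ with
$$\mu^1(q_2[H])=\psi([H^1])=\mu^1(q_2[K])$$
for an extension $K$. Act by the inverse $[K]^{-1}=[-K]$. Additivity gives
$$\mu^1(q_2[(-K)\cdot H])=\mu^1(q_2[H])-\mu^1(q_2[K])=0.$$
Lemma \ref{lemma3.12} then gives $q_2[(-K)\cdot H]=q_2[E]$. Since $(-K)\cdot H=\Psi([H^1]^{-1},[H])$, applying $q_1$ yields $q_1q_2[H]=q_1q_2[E]$.

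\textbf{Expected obstacle.} The main difficulty is the additivity claim for $K\cdot H$ when $K$ moves the 1-skeleton. One must verify that the basepoint arcs used for $H$ may be transported through $K$ without changing double-coset labels, and that $\mu([K])$ itself is measured with a consistent whisker. I would handle this by fixing the whisker at the (fixed) basepoint $e^0\times I$. Any loop swept by the basepoint arcs is then the image of a loop in $\Sigma\times I$, hence lies in $G$, and is absorbed by the double coset quotient.
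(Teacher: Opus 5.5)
Your proposal is correct and follows the same route as the paper. Like the paper, you use additivity $\mu^1(q_2([K]\cdot[H]))=\mu^1(q_2[K])+\mu^1(q_2[H])$ for an extension $K$ of an element of $\mathscr{G}_1$ (valid because such $K$ fixes the basepoint), then subtract a $\psi$-value and invoke Lemma \ref{lemma3.12}; you add detail the paper omits on why the double-coset labels of the $H$-part are unchanged in $K\cdot H$ and why $\psi$ is independent of the chosen extension.
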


\begin{proof}
    For $q_1\circ q_2[H]=q_1\circ q_2[H']\in(\mathscr{H}^f/\mathscr{G}_2)/\mathscr{G}_1$, by definition there exists $q_2[G]\in\mathscr{G}_1$ such that $q_2([G]\cdot[H])=q_2[H']\in\mathscr{H}^f/\mathscr{G}_2$. Since $[G]\in\mathscr{G}_1$ fixes the basepoint, $\mu^1(q_2([G]\cdot[H]))=\mu^1(q_2[G])+\mu^1(q_2[H])$. And if $\mu^*(q_1\circ q_2[H])=0$, then there exists $q_1[G]\in\mathscr{G}_1$ such that $\mu^1(q_2[H])=\mu^1(q_2[G])$. So, $\mu^1(q_2[-G\cdot H])=0$, and by Lemma \ref{lemma3.12} and Theorem \ref{thm2.3} we learn that $q_2[G\cdot H]=q_2[E]$, thus, $q_1\circ q_2[H]=q_1\circ q_2[E]$.
\end{proof}

Finally, let's discuss the self-homotopies of the basepoint $\mathscr{G}_0$. A self-homotopy of the basepoint is just a loop from the basepoint to itself, which naturally corresponds to an element in $\pi_1N$. In other words, $\mathscr{H}^f_0(e^0,N)\cong\pi_1N$ \par

\begin{figure}
    \begin{overpic}[scale=0.5]{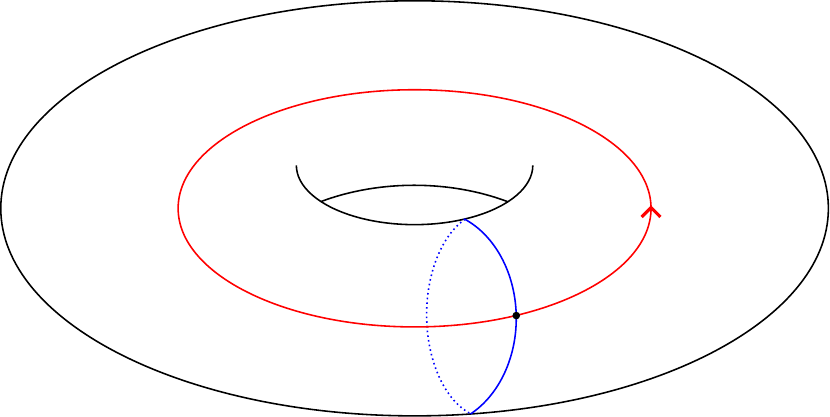}
		\put(80,25){\textcolor[rgb]{1,0,0}{$s$}}
		\put(63,15){\textcolor[rgb]{0,0,1}{$h$}} 
	\end{overpic}
    \caption{An example that the homotopy can be extended}
    \label{fig:figure4}
\end{figure}

\begin{figure}
    \begin{overpic}[scale=0.5]{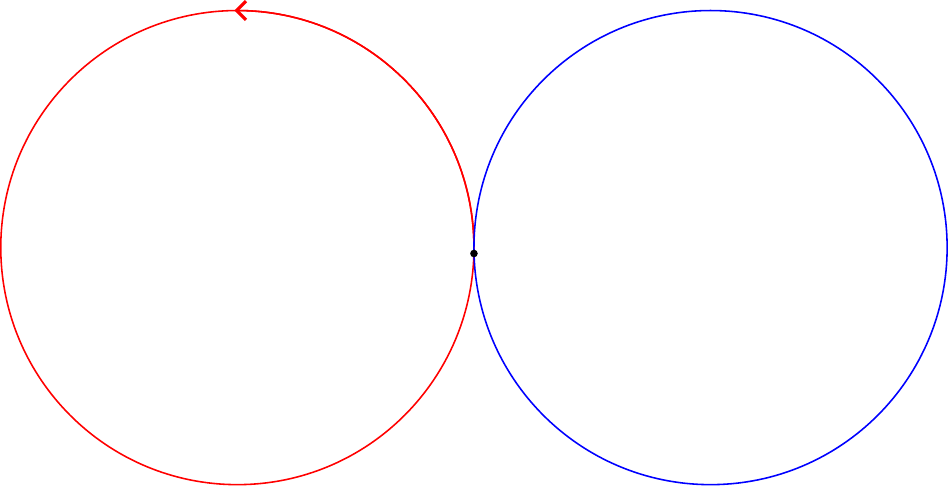}
		\put(20,45){\textcolor[rgb]{1,0,0}{$s$}}
		\put(70,45){\textcolor[rgb]{0,0,1}{$h$}} 
	\end{overpic}
    \caption{An example that the homotopy can not be extended}
    \label{fig:figure5}
\end{figure}

\begin{lemma}\label{lemma3.18}
    Let $g:S^1\rightarrow N$ be a continuous map. Fix a basepoint $e^0\in S^1$, then consider the restriction $r:\mathscr{H}^g_0(S^1,N)\rightarrow\mathscr{H}^g_0(e^0,N)$. Denote by $[H^*_s]$ the self-homotopy of the basepoint along $s\in\pi_1N$, $H^*_s\in\textup{im}(r)$ if and only if $s$ and $g$ commute as elements in $\pi_1N$, see Figure \ref{fig:figure4} and \ref{fig:figure5}.
\end{lemma}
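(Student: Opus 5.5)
The plan is to reduce the extension question to a statement about maps of the torus $S^1\times S^1$ (equivalently $S^1\times I$ with its ends identified) into $N$. A self-homotopy $H$ of $g$ on $S^1$ is a map $H:S^1\times I\to N$ with $H_0=H_1=g$. Identifying the two ends gives a map $T:S^1\times S^1\to N$. Its restriction to $S^1\times\{0\}$ is $g$, and its restriction to $\{e^0\}\times S^1$ is the loop $H|_{e^0\times I}$, which represents $s$. Conversely, a self-homotopy of the basepoint along $s$ extends to a self-homotopy of $g$ exactly when the map $g\vee s:S^1\vee S^1\to N$, defined on the 1-skeleton of the torus, extends over the 2-cell of the torus. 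I will work in the standard CW structure $S^1\times S^1=e^0\cup e^1_a\cup e^1_b\cup e^2$, with attaching map $aba^{-1}b^{-1}$.

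For necessity, suppose $H^*_s=r([H])$. The square $S^1\times I$ has boundary word $g\,s\,g^{-1}s^{-1}$ read around it: bottom edge $g$, right edge $s$, top edge $g$ reversed, left edge $s$ reversed. The two vertical edges are both the basepoint path $H(e^0,t)$. Since $H$ maps the whole square to $N$, this boundary loop is null-homotopic, so $gsg^{-1}s^{-1}=1$ in $\pi_1N$. This means $s$ and $g$ commute.

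For sufficiency, suppose $gs=sg$ in $\pi_1N$. I define a map on the boundary of the square $I\times I$ by $g$ on the two horizontal edges and by a representative loop of $s$ on the two vertical edges. The boundary loop represents $gsg^{-1}s^{-1}=1$, so it extends over the square. The result is a map $H:S^1\times I\to N$ with $H_0=H_1=g$ and $H|_{e^0\times I}=s$, so $[H^*_s]\in\textup{im}(r)$. I also need to respect the definition of $\mathscr{H}^g_0(e^0,N)$ as homotopy classes rel endpoints. Homotopic representatives of $s$ give the same class, so the argument depends only on the class of $s$ in $\pi_1N$, as required. Alternatively, the homotopy extension theorem (Lemma \ref{lemma3.8}) applied to the pair $(S^1\times I,\ S^1\times\partial I\cup e^0\times I)$ packages the same obstruction: the only obstruction to extending is the boundary word of the single 2-cell.

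The main subtlety I expect is basepoint bookkeeping. The loop $g$ must be read as an element of $\pi_1(N,g(e^0))$, and $s$ as a based loop at the same point. If a whisker is used to identify $\pi_1N$, both elements must be transported along the same whisker so that the commutator is computed consistently; conjugating both by the same path preserves commutation. Once this is fixed, both directions come down to the observation that the 2-cell of $S^1\times I$ with ends glued has attaching word $[g,s]$.
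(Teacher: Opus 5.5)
Your proof is correct and is essentially the paper's argument. Cutting $S^1\times I$ along $e^0\times I$ gives a square whose boundary word is the commutator of $g$ and $s$. Necessity follows because the square is a null-homotopy of that commutator; the paper phrases this as an explicit homotopy from $g$ to $sgs^{-1}$. Sufficiency follows by filling the square with a disk bounding $gsg^{-1}s^{-1}$, exactly as the paper does.

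Your aside invoking Lemma \ref{lemma3.8} is not quite apt: the homotopy extension property extends homotopies, and does not detect the obstruction to extending a map over the 2-cell. The main argument does not need it.
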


\begin{proof}
    On the one hand, if $H^*_s\in\textup{im}(r)$, there exists a homotopy $[H]\in\mathscr{H}^g_0(S^1,N)$ such that $r[H]=[H^*_s]$, thus consider the homotopy $$G(x,t)=\left\{
    \begin{aligned}
    & s(3xt),0\leq x\leq 1/3,\\
    & H(3x-1,t),1/3<x\leq 2/3,\\
    & s(3t-3xt),2/3<x\leq 1
    \end{aligned}
    \right.
    $$
    and $[G(x,0)]=g$, $[G(x,1)]=sgs^{-1}$, therefore $sgs^{-1}=g$, which means $s$ and $g$ commutes.\par
    On the other hand, if $sgs^{-1}=g$, $sgs^{-1}g^{-1}=1$, thus, there exists a disk $D$ such that $[\partial D]=sgs^{-1}g^{-1}$. Define $H:S^1\times I\rightarrow N$, $H|_{S^1\times\{0\}}=H|_{S^1\times\{1\}}=g$ and $H|_{e^0\times I}=g$, and $H=D$ on the other area. Then $[H]\in\mathscr{H}^g_0(S^1,N)$ and $r[H]=[H^*_s]$ by construction.
\end{proof}

Let $C$ be the centralizer of $G=i_*(\pi_1\Sigma)\in\pi_1N$, i.e. $C$ consists of the elements commuting with all the elements in $G$. By Lemma \ref{lemma3.18}, $\textup{im}(r_1)\cong C$.\par

\begin{definition}
    For $s\in C$, denote by $H_s^1$ the homotopy of the 1-skeleton extending $s$. By the homotopy extension theorem, we may extend $H_s^1$ to a homotopy $H_s$ of $\Sigma$ up to $\mathscr{G}_2$. Thus, $H_s$ agrees with $f$ on the boundary of the 2-cell $D^2$. Define $\eta:C\rightarrow\pi_2N/\textup{im}(\iota)$ by $\eta(s)=[H_s|_{D^2\times{1}}\cup_\partial f|_{D^2}]$.
\end{definition}

\begin{theorem}
    $\eta$ is a group homomorphism and $\mathscr{G}_0\cong\ker(\eta)$.
\end{theorem}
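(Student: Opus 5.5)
We will follow the pattern of the proof that $\iota$ is a well-defined homomorphism with $\mathscr{G}_1\cong\ker(\iota)$, now one skeleton lower.

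\textbf{Step 1: $\eta$ is well defined.} For $s\in C$, Lemma \ref{lemma3.18}, applied to each of the $2g$ circles of $\Sigma^1\cong\bigvee_{2g}S^1$, produces a self-homotopy $H^1_s$ of $\Sigma^1$ whose restriction to the basepoint is $H^*_s$. This requires $s$ to commute with every generator $f_*(a_i),f_*(b_i)$, which holds since $s\in C$. Two choices of $H^1_s$ with the same restriction to $e^0$ differ by an element of $\ker(r_1)\cong\pi_2(N)^{2g}$. By the homotopy extension theorem (Lemma \ref{lemma3.8}), $H^1_s$ extends to a homotopy $H_s$ of $\Sigma$. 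At time $1$, $H_s$ agrees with $f$ on $\partial D^2$, so gluing $H_s|_{D^2\times\{1\}}$ to $f|_{D^2}$ gives a class in $\pi_2(N)$.
\begin{itemize}
\item Changing the extension $H_s$ of a fixed $H^1_s$ does not change this class. The argument is the one in the proof for $\iota$: two extensions of the same boundary homotopy give $2$-disks that are homotopic rel boundary, since both are homotopic through $H_s(D^2,t)$ back to $f(D^2)$ rel the moving boundary.
\item Changing $H^1_s$ by $a\in\ker(r_1)$ changes the class by $\iota(a)$. This uses the additivity $[S_{a\cdot b}]=[S_a]\cdot[S_b]$ already established, applied to the concatenation $H_a\cdot H_s$.
\end{itemize}
Hence $\eta(s)$ is well defined in $\pi_2N/\textup{im}(\iota)$. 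We also need $\textup{im}(\iota)$ to be a subgroup for which the quotient makes sense. For $\pi_2N$ viewed as an abelian group this is automatic; if the $\pi_1$-action is in play, we take the quotient by the generated normal submodule.

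\textbf{Step 2: $\eta$ is a homomorphism.} For $s,s'\in C$, the concatenation $H^1_s\cdot H^1_{s'}$ is an extension of $H^*_{ss'}$, and $H_s\cdot H_{s'}$ extends it. The glued sphere for the concatenation decomposes as the sphere for $H_s$ plus the sphere for $H_{s'}$, the latter transported along the track of the basepoint under $H_s$, i.e. acted on by $s$. This is the main obstacle. A priori we only get $\eta(ss')=\eta(s)+s\cdot\eta(s')$, a crossed homomorphism. I would first try to show that the $\pi_1$-twist is absorbed. Concretely, $s\cdot x-x$ can be realized as $\iota$ of a suitable element, or one works in $\pi_2N$ modulo the $\pi_1$-action (as the $\mathbb{A}$-groups already quotient by $G$ on both sides). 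Failing that, the statement must be read with $\pi_2N$ treated via the coinvariants, and $\ker(\eta)$ still makes sense as a subgroup.

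\textbf{Step 3: $\mathscr{G}_0\cong\ker(\eta)$.} If $[H^*_s]\in\mathscr{G}_0=\textup{im}(r_1\circ r_2)$, then $s=r_1r_2[H]$ for a self-homotopy $H$ of $f$. Taking $H_s=H$, the disk $H|_{D^2\times\{1\}}$ equals $f|_{D^2}$, so $\eta(s)=0$. Conversely, suppose $\eta(s)=0$, i.e. the glued sphere equals $\iota(a)$ for some $a$. Replace $H^1_s$ by $H^1_{a^{-1}}\cdot H^1_s$, which still restricts to $H^*_s$; its extension now has null-homotopic glued sphere. A null-homotopy of $H_s|_{D^2\times\{1\}}\cup f|_{D^2}$ rel $\partial D^2$ is then appended as a final stage, fixed on $\Sigma^1$, to produce a genuine self-homotopy $H$ of $f$ with $r_1r_2[H]=[H^*_s]$. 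Combined with $\mathscr{H}^f_0(e^0,N)\cong\pi_1N$ and $\textup{im}(r_1)\cong C$ (Lemma \ref{lemma3.18}), this identifies $\mathscr{G}_0$ with $\ker(\eta)\subset C$.
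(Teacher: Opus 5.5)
\textbf{The gap is in Step 2, and it cannot be closed.} Your computation $\eta(ss')=\eta(s)+s\cdot\eta(s')$ is the correct answer: in general $\eta$ is only a crossed homomorphism, not a homomorphism. Here is an example.
\begin{itemize}
\item Take $N=(S^1\times S^4)\#(S^2\times S^3)$. Then $\pi_1N=\langle t\rangle$, and $\pi_2N\cong\mathbb{Z}[t^{\pm1}]$ as a module, generated by $y=[S^2\times\mathrm{pt}]$.
\item Let $f$ be an unknotted genus-$g$ surface in a small ball, tubed to $S^2\times\mathrm{pt}$. Then $G=1$ and $C=\pi_1N$.
\item $\textup{im}(\iota)=0$. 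A sphere swept along the $j$-th circle enters $\partial D^2$ once with each sign, transported by words lying in $G=1$, so the two copies cancel.
\item Homotope $f$, keeping the basepoint in the ball, to the collapse $\Sigma\to S^2$ followed by $y$, and drag around $s$. This gives $\eta(s)=s\cdot y-y$, up to the convention $s\leftrightarrow s^{-1}$.
\item Hence $\eta(t^2)=t^2-1\neq 2t-2=2\eta(t)$.
\end{itemize}

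This example defeats each remedy you list:
\begin{itemize}
\item $s\cdot x-x$ need not lie in $\textup{im}(\iota)$, which is $0$ here.
\item Quotienting by $G$ does nothing, since $G=1$.
\item In the coinvariants $(\pi_2N)_{\pi_1N}\cong\mathbb{Z}$ every $\eta(s)$ vanishes, so the kernel would be all of $C\cong\mathbb{Z}$. But by your own Step 3, $\mathscr{G}_0=\{s: s\cdot y=y\}=\{1\}$. So the coinvariant reading breaks the converse half of Step 3.
\end{itemize}
The paper's proof does not supply the missing step either. Its items (1)--(3) concern the self-homotopies $H_s$, which exist only for $s\in\ker(\eta)$, and additivity of $\eta$ on all of $C$ is asserted, not checked.

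\textbf{What is true, and what your Steps 1 and 3 actually prove.} $\eta$ is a crossed homomorphism for the $C$-action on $\pi_2N/\textup{im}(\iota)$, and $\mathscr{G}_0=\eta^{-1}(0)$ inside $C\subset\pi_1N$. Your argument for the latter is more explicit than the paper's one-line ``one-to-one correspondence''. This zero set is a subgroup: it equals $\textup{im}(r_1\circ r_2)$, and zero sets of crossed homomorphisms are subgroups anyway. So Step 3 needs nothing from Step 2.

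The ingredient you should add is that $\textup{im}(\iota)$ is $C$-stable. A sphere $x$ swept along the $j$-th circle changes the disk by $\overline{\partial r/\partial\alpha_j}\cdot x$. This is the Fox derivative of the attaching word $r=\prod_i[\alpha_i,\beta_i]$, pushed into $\mathbb{Z}G$ by $f_*$. Since $C$ commutes with $G$, $s\cdot\textup{im}(\iota)=\textup{im}(\iota)$. This $C$-stability does two things:
\begin{itemize}
\item It lets $\eta$ and the crossed-homomorphism identity descend to the quotient.
\item It repairs an ordering issue in Steps 1 and 3. The additivity proved for $\iota$ covers only based homotopies. Running $H^1_{a^{-1}}$ before $H^1_s$ changes the glued sphere by $s^{\pm1}\cdot\iota(a^{-1})$, not by $\iota(a^{-1})$. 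Either run $H^1_{a^{-1}}$ after $H^1_s$, or use $C$-stability to choose $a$ correctly.
\end{itemize}
Conversely, do not quotient by the $\pi_1N$-submodule generated by $\textup{im}(\iota)$, as you suggest at the end of Step 1. That could enlarge $\eta^{-1}(0)$ beyond $\mathscr{G}_0$.
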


\begin{proof}
    By the discussion above, a self-homotopy $s\in\pi_1N$ of the basepoint can be extended to a self-homotopy $H_s$ of $\Sigma$ if and only if $\eta(s)=0$, and the extension is unique up to $\mathscr{G}_1$ and $\mathscr{G}_2$. On the other hand, if $q_1\circ q_2[H]\in\mathscr{G}_0$ is a homotopy, then $H|_{e^0\times I}$ gives a self-homotopy of the basepoint. So, there is a one-to-one correspondence between $\mathscr{G}_0$ and $\ker(\eta)$. Moreover, it is easy to check the following properties by the homotopy extension theorem:
    \begin{enumerate}
    \item[(1)] If $s=0\in\textup{ker}(\eta)$, $H_s$ is homotopic to the trivial homotopy up to a homotopy fixing the basepoint;
    \item[(2)] Concatenating self-homotopies multiplies the subscripts: $[H_{st}]=[H_s]\cdot [H_t]$;
    \item[(3)] Reversing a self-homotopy inverts the subscript: $-[H_s]=[H_{s^{-1}}]$.
    \end{enumerate}
    Thus, there exists a group isomorphism $\mathscr{G}_0\cong\ker(\eta)$.
\end{proof}

\begin{definition}\label{def3.21}
    Define the affine action
    \begin{align*}\zeta:\mathscr{G}_0\times \mathbb{A}_{f_*} & \rightarrow\mathbb{A}_{f_*}\\
    (s,a) & \mapsto sas^{-1}+U_s
    \end{align*}
    where $s\in\mathscr{G}_0$, $U_s=\mu^*(q_1\circ q_2[H_s])\in\mathbb{A}_{f_*}$ and $H_s$ is an extension of $s$. Define $\mathbb{A}_f$ as the orbit space $\mathbb{A}_{f_*}/\mathscr{G}_0$.
\end{definition}

\begin{proposition}\label{prop3.25}
    $\zeta$ is a well-defined group action and $\mu^*:(\mathscr{H}^f/\mathscr{G}_2)/\mathscr{G}_1)\rightarrow\mathbb{A}_{f_*}$ induces a map $\overline\mu:((\mathscr{H}^f/\mathscr{G}_2)/\mathscr{G}_1)/\mathscr{G}_0\cong\mathscr{H}^f/\mathscr{H}^f_0\rightarrow\mathbb{A}_f$. And if $\overline{[H]}\in\mathscr{H}^f/\mathscr{H}^f_0$ satisfies $\overline\mu(\overline{[H]})=0$, then $\overline{[H]}=\overline{[E]}\in\mathscr{H}^f/\mathscr{H}^f_0$.
\end{proposition}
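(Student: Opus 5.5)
The argument splits into three parts, in this order: $\zeta$ is well defined, $\zeta$ is a group action, and $\mu^*$ descends to $\overline\mu$ with trivial kernel.

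\textbf{Well-definedness of $\zeta$.} The term $U_s$ depends on a choice of extension $H_s$ of $s$. By the isomorphism $\mathscr{G}_0\cong\ker(\eta)$, two extensions $H_s,H_s'$ of the same $s$ differ by an element of $\mathscr{G}_1$ and $\mathscr{G}_2$. That is, $q_1\circ q_2[H_s']=q_1\circ q_2([K]\cdot[H_s])$ with $[K]$ fixing the basepoint. Lemma \ref{lemma3.12} and Lemma \ref{lemma3.17} then show that $\mu^*$ of the two classes agree in $\mathbb{A}_{f_*}$. This is exactly because $\mathbb{A}_{f_*}$ was obtained by quotienting out $\phi(\mathscr{G}_2)$ and $\psi(\mathscr{G}_1)$.

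Next, the conjugation $a\mapsto sas^{-1}$ must descend to $\mathbb{A}_{f_*}$. Since $s\in C$ centralizes $G$, conjugation by $s$ maps double cosets $GgG$ to double cosets $G(sgs^{-1})G$. It also preserves the relations $\overline1=0$ and $\overline{g}^{-1}=-\overline g$. One must further check that conjugation preserves the subgroups $\operatorname{im}\phi$ and $\operatorname{im}\psi$. I would do this by transporting a self-homotopy $K\in\mathscr{G}_2$ or $\mathscr{G}_1$ along $H_s$, i.e. considering $H_s\cdot K\cdot(-H_s)$. This is again a self-homotopy of the same type (normality, Proposition \ref{prop3.7}), and its invariant is $s\mu(K)s^{-1}$, because moving the basepoint whisker along $s$ conjugates $\mu$ by property (2) of the proposition following Definition \ref{def2.5}.

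\textbf{Action axioms.} The key formula is
\[
\mu([H_s]\cdot[G])=\mu([H_s])+s\,\mu([G])\,s^{-1}.
\]
Self-intersections of a concatenation split into those of each piece. No mixed double points occur, since the pieces lie in disjoint $I$-slabs of $N\times I$. The double points of the second piece are read with the whisker dragged along the basepoint track $s$, and this produces the conjugation. The formula gives $\mu^*$-equivariance, $\mu^*(\Gamma(s,x))=\zeta(s,\mu^*(x))$. Applying it to $H_{st}=H_s\cdot H_t$ (property (2) in the proof of $\mathscr{G}_0\cong\ker\eta$) gives the cocycle identity
\[
U_{st}=U_s+sU_ts^{-1}.
\]
Together with $U_1=0$ (property (1)), this makes $\zeta$ a group action. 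Equivariance shows that $\mu^*$ maps $\mathscr{G}_0$-orbits to $\mathscr{G}_0$-orbits, so $\overline\mu$ is defined on $((\mathscr{H}^f/\mathscr{G}_2)/\mathscr{G}_1)/\mathscr{G}_0$, which is identified with $\mathscr{H}^f/\mathscr{H}^f_0$ via the bijection $R$.

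\textbf{Trivial kernel.} Suppose $\overline\mu(\overline{[H]})=0$. Then there is $s\in\mathscr{G}_0$ with $\zeta(s,\mu^*[H])=0$, which by equivariance means $\mu^*(\Gamma(s,q_1q_2[H]))=0$. Lemma \ref{lemma3.17} then gives $q_1q_2([H_s]\cdot[H])=q_1q_2[E]$, so $\overline{[H]}=\overline{[E]}$.

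\textbf{Main obstacle.} The delicate step is the concatenation formula with the conjugation twist. The difficulty is bookkeeping of whiskers: the basepoint of $\Sigma\times I$ moves along the loop $s$, and one must check that the double-coset class of each double point of $G$ changes exactly by conjugation by $s$. Here it matters that $s$ centralizes $G$, which is what makes conjugation act on double cosets. I would handle this by choosing the whisker for the concatenation as $\nu$ followed by the track of $H_s$ on $e^0$, and applying the whisker-change rule.
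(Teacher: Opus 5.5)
Your proposal is correct and follows essentially the same route as the paper. The concatenation formula $\mu([H_s]\cdot[G])=\mu([H_s])+s\,\mu([G])\,s^{-1}$ gives both the cocycle identity $U_{st}=U_s+sU_ts^{-1}$ (hence the action) and the $\mu^*$-equivariance (hence $\overline\mu$), and the kernel statement then follows from Lemma \ref{lemma3.17}, Lemma \ref{lemma3.12} and Theorem \ref{thm2.3}; you also check two points the paper leaves implicit, namely that $U_s$ does not depend on the extension $H_s$ and that conjugation by $s$ descends to $\mathbb{A}_{f_*}$ (for $H_s\cdot K\cdot(-H_s)$ you additionally need the contributions of $H_s$ and $-H_s$ to cancel, which follows from the same concatenation formula applied to $H_s\cdot(-H_s)\simeq E$).
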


\begin{proof}
    Notice that $U_s\coloneqq\mu^*([H_s])$ is determined by $s$, and under concatenation, for $s,r\in\ker(\eta)$, $U_{s\cdot t}=U_s+sU_rs^{-1}$, this implies that the action $\zeta(s,a)\coloneq sas^{-1}+U_s$ satisfies
    $$\zeta(sr,a)=U_{sr}+srar^{-1}s^{-1}=\zeta(s,(U_r+rar^{-1}))=\zeta(s,\zeta(r,a)),$$ which is well-defined. Moreover, for $q_1\circ q_1[H]\in(\mathscr{H}^f/\mathscr{G}_2)/\mathscr{G}_1$ and $[H_s^*]\in\mathscr{G}_0$, we have $\mu^1(q_1\circ q_2[H_s\cdot H])=\mu^1(q_1\circ q_2[H_s])+s\mu^1(q_1\circ q_2[H])s^{-1}$. The conjugacy action appears because when we compute the intersection number of $[H]$ in $[H_s\cdot H]$, the basepoint moves along $s$, which coincides with the action of $\mathscr{G}_0$ on $\mathbb{A}_{f_*}$. Therefore, the induced map $\overline\mu:\mathscr{H}^f/\mathscr{H}^f_0\rightarrow\mathbb{A}_f$ is well-defined. And
    if $\overline\mu(\overline{[H]})=0\in\mathbb{A}_f$, then there exists $[H_s^*]\in\mathscr{G}_0$ such that $\zeta(s,\mu^1(q_1\circ q_1[H]))=\mu^1(q_1\circ q_2[H'])$, thus $\mu^1(q_1\circ q_2[H_s\cdot H])=\mu^1(q_1\circ q_2[H'])\in\mathbb{A}_{f_*}$. Thus, by Lemma \ref{lemma3.17}, \ref{lemma3.12} and Theorem \ref{thm2.3}, we learn that $\overline{[H]}=\overline{[E]}\in\mathscr{H}^f/\mathscr{H}^f_0$.
\end{proof}

\section{The invariant map and applications}
\label{sec_application}
With all the preparation, we can define the invariant map $\textup{fq}$.
\begin{definition}\label{def4.1}
     Define the invariant map $\textup{fq}$ as
\begin{align*}
    \textup{fq}:p^{-1}([f]) & \rightarrow\mathbb{A}_f\\
    f' & \mapsto \textup{fq}(f')\coloneq\overline\mu(\overline{[H]}),
\end{align*}

where $H$ is a homotopy from $f$ to $f'$.
\end{definition}
We claim that this map is a well-defined bijection, which proves Theorem \ref{thm1.1}.

\begin{proof}
To show $\textup{fq}$ is well defined, we need to show that it is independent of the choice of $H$ and is only determined by the isotopy class of $f'$, which we have discussed in detail in the last section. If there exist two homotopies $H$ and $H'$ from $f$ to $f'$, the product $[H]\cdot[-H']$ is a self-homotopy of $f$, so $0=\overline\mu(\overline{[H\cdot(-H')]})=\overline\mu(\overline{[H]})-\overline\mu(\overline{[H']})$, that is, $\overline\mu(\overline{[H]})=\overline\mu(\overline{[H']})$. Therefore, $\textup{fq}$ is well defined.

To show $\textup{fq}$ is a bijection, let $f_1,f_2\in p^{-1}([f])\cong\mathscr{H}^f/\mathscr{H}^f_0$ be two embeddings homotopic to $f$. By definition, $\textup{fq}(f_1)=\textup{fq}(f_2)$ implies that there exists $H_1$ connecting $f$ and $f_1$ and $H_2$ connecting $f$ and $f_2$, such that $\overline\mu(\overline{[H_1]})=\overline\mu(\overline{[H_2]})$. Thus, there exists $[G^*]\in\mathscr{G}_0$ $\overline\mu([G^*\cdot H_1])=\overline\mu([H_2])\in\mathbb{A}_{f_*}$. Thus consider the homotopy $H_3\coloneqq-H_2\cdot G\cdot H_1$, $H_3$ is a homotopy connecting $f$ and $f'$ such that
\begin{equation}
    \begin{split}
    \overline\mu([H_3]) &=\overline\mu([-H_2])+\overline\mu_3([H_s])+s\overline\mu([H_1])s^{-1}\\
    &=-\overline\mu([H])+U_s+s\overline\mu([H])s^{-1}\\
    &=-(U_s+s\overline\mu([H])s^{-1})+U_s+s\overline\mu_3([H])s^{-1}\\
    &=0\in\mathbb{A}_{f_*}
    \end{split}
\end{equation}

Then by Lemma \ref{lemma3.17}, \ref{lemma3.12} and Theorem \ref{thm2.3}, we learn that $[H_3]=[E]\in\mathscr{H}^f$, thus $f_1$ is isotopic to $f_2$, and so $\textup{fq}^i$ is injective. We finish the proof of Theorem \ref{thm1.1}.
\end{proof}

Now we get a bijection, and although we do not exactly know what the map $\phi$, $\psi$, and the action are. If we have some conditions on the homotopy groups of $N$, the obstructions can be removed. 

\begin{proof}[Proof of Corollary 1.3]
    In our bijection, if $N$ is simply-connected or if $f_*:\pi_1\Sigma\rightarrow\pi_1N$ is surjective, the quotient $\mathbb{A}_{[f]}$ has only a single element, which means that there is only 1 isotopy class. And if the surface admits an algebraic dual sphere $[S]\in\pi_3(N)$, which means $\lambda_N([S],f)=1$ and $\mu([S])=0\in\mathbb{A}_{[f]}$, then for any $\Sigma [g_i]\in\mathbb{Z}G\backslash\pi_3(N)/G$, we have 
    $$\phi(g\cdot [S])=0+[\lambda_N(\Sigma g_i\cdot [S],f)]=[\Sigma g_i\cdot\lambda_N([S],i)]=[\Sigma g_i]$$
    this means $\phi(\pi_3N)=\mathbb{A}_{[f]}$, and after the quotient the group $\mathbb{A}_{f_1}$ is trivial, and so is $\mathbb{A}_f$, which means there is only 1 isotopy class.
\end{proof}

Conversely,

\begin{proof}[Proof of Corollary 1.4]
    If $\pi_2N$, $\pi_3N$ are both trivial, then map $\phi$ and $\psi$ are both trivial, so $\mathbb{A}_f=\mathbb{A}_{[f]}/\ker\eta$. And since $\ker\eta\subset\pi_1N$ and if there are infinitely many conjugacy classes in $G\backslash\pi_1N/G$, there are infinitely many orbits, hence there are infinitely many isotopic classes.
\end{proof}

\bibliographystyle{amsalpha}
\bibliography{references}

\end{document}